\numberwithin{equation}{section}
\newtheorem{defn}{Definition}[section]
\newtheorem{theorem}{Theorem}[section]
\newtheorem{corollary}[theorem]{Corollary}
\newtheorem{lemma}[theorem]{Lemma}
\newtheorem{prop}[theorem]{Proposition}
\newtheorem{remark}[theorem]{Remark}
\theoremstyle{definition}
\newtheorem{obser}{Observation}
\newtheorem{property}{Property}
\newtheorem{example}[theorem]{Example}
\def \begineq{\begin{equation}}
\def \endeq{\end{equation}}
\def \bb{\mathbb}
\def \QQ{{\bb{Q}}}
\def \RR{{\bb{R}}}
\def \TT{{\bb{T}}}
\def \ZZ{{\bb{Z}}}
\def \({\left(}
\def \){\right)}
\def \<{\langle}
\def \>{\rangle}
\def \bar{\overline}
\begin{document}

\title{ Some small orbifolds over polytopes }




\author[S. Sarkar]{Soumen Sarkar}

\address{\it{Theoretical Statistics and Mathematics Unit, Indian
Statistical Institute, 203 B. T. Road, Kolkata 700108, India}}

\email{soumens$_-$r@isical.ac.in}

\subjclass[2000]{55N10, 55N33}

\keywords{Orbifold, group action, orbifold fundamental group, homology group}

\abstract We introduce some compact orbifolds on which there is a certain finite group action having a simple
convex polytope as the orbit space. We compute the orbifold fundamental group and homology groups of these
orbifolds. We compute the cohomology rings of these orbifolds when the dimension of the orbifold is even.
These orbifolds are intimately related to the notion of small cover.
\endabstract

\maketitle
\section{Introduction}
An $n$-dimensional simple polytope is a convex polytope in $\RR^n$ where exactly $n$ bounding hyperplanes meet at 
each vertex. The codimension one faces of simple polytope are called facets.
In this article we introduce some $n$-dimensional orbifolds on which there is a natural
$ \ZZ_{2}^{n-1} $ action having a simple polytope as the orbit space, where $\ZZ_2 = \ZZ/2\ZZ $. We call
these orbifolds small orbifolds. The fixed points of $ \ZZ_2^{n-1} $ action on an $n$-dimensional small
orbifold is homeomorphic to the $1$-skeleton of the polytope.
The small orbifolds are closely related to the notion of small cover.
A small cover of dimension $ n $ is an $n$-dimensional smooth manifold endowed with a natural
action of $ \ZZ_{2}^{n} $ having a simple $n$-polytope as the orbit space, see \cite{DJ}. The fixed point set of
$ \ZZ_2^n $ action on a small cover correspond bijectively to the set of vertices of polytope.

In section \ref{def} we give the precise definition of small orbifold and give two examples. We show
the smoothness of small orbifold. In section \ref{orbf} we calculate the orbifold fundamental group of small
orbifolds. We show that the universal orbifold cover of $n$-dimensional ($n > 2$) small orbifold is
diffeomorphic to $\RR^n$. Theorem \ref{isos} shows that the space $\mathcal{Z}$, constructed
in Lemma 4.4 of \cite{DJ}, is diffeomorphic to $\RR^n$ if there is an s-characteristic function (definition \ref{defs})
of simple $n$-polytope. In section \ref{hom} we construct a $CW$-complex structure on small
orbifold. We compute the singular homology groups of small orbifold with integer coefficients.
We establish a relation between the modulo $2$ Betti numbers of a small orbifold and the $h$-$vector$
of the polytope. In section \ref{crto} we compute the singular cohomology groups and the cohomology
ring of even dimensional small orbifold. In the last section we discuss the toric version of small orbifold.
All points of the quotient space is smooth except at finite points. Though the quotient space is not an orbifold
(when $n >2$), we compute the singular homology groups of these spaces.

\section{Definition and orbifold structure}\label{def}
Let $ P $ be a simple polytope of dimension $ n $. 
Let $\mathcal{F}(P) = \{ F_i, ~ i= 1, 2, \ldots, m \} $ be the set of facets of $P$.
Let $V(P)$ be the set of vertices of $ P $. We denote the underlying additive group of the vector space
$\mathbb{F}_{2}^{n-1} $ by $\ZZ_2^{n-1}$.

\begin{defn}\label{defs}
A function $\vartheta : \mathcal{F}(P) \to \mathbb{Z}_{2}^{n-1}$ is called an s-characteristic function of the
polytope $P$ if the facets $F_{i_{1}}, F_{i_{2}}, \ldots , F_{i_{n}}$ intersect at a vertex of $P$ then the set
$$\{ \vartheta_{i_{1}}, ~\vartheta_{i_{2}}, ~\ldots, ~ \vartheta_{i_{k-1}}, ~ \widehat \vartheta_{i_{k}},
~ \vartheta_{i_{k+1}}, ~ \ldots, ~ \vartheta_{i_{n}} \},$$
where $\vartheta_i := \vartheta(F_i)$, is a basis of $ \mathbb{F}_{2}^{n-1} $ over
$\mathbb{F}_{2}$ for each $ k $, $ 1 \leq k \leq n $.
We call the pair $(P, \vartheta)$ an s-characteristic pair.
\end{defn}

Here the symbol $~\widehat{}~$ represents the omission of corresponding entry.
We give examples of s-characteristic function in \ref{eg01} and \ref{eg02}.
Now we give the constructive definition of small orbifold using the s-characteristic pair $(P, \vartheta)$.

Let $ F $ be a face of the simple polytope $ P $ of codimension $k \geq 1$.
Then $$ F = F_{i_1} \cap F_{i_2} \cap \ldots \cap F_{i_k},$$ where $ F_{i_j} \in \mathcal{F}(P)$
containing $ F $. Let $ G_{F} $ be the subspace of $ \mathbb{F}_{2}^{n-1} $ spanned
by $ \vartheta_{i_1}, \vartheta_{i_2}, \ldots ,$ $ \vartheta_{i_k} $. Without any confusion we denote
the underlying additive group of the subspace $ G_F $ by $ G_F $. By the definition of $ \vartheta $,
$ G_{v} = \ZZ_{2}^{n-1} $ for each $ v \in V(P) $. So the s-characteristic function $\vartheta$ determines
a unique subgroup of $\ZZ_2^{n-1}$ associated to each face of the polytope $ P $. Note that if $k < n$ then
$G_F \cong \ZZ_2^k$. The subgroup $G_F$ of $ \mathbb{Z}_{2}^{n-1} $ is a direct summand.

Each point $ p $ of $ P $ belongs to relative interior of a unique face $ F(p) $ of $ P $.
Define an equivalence relations $\sim$ on $ \mathbb{Z}_{2}^{n-1} \times P $ by
\begin{equation}\label{idenso}
(t,p) \sim (s,q) ~ \mbox{if} ~ p = q ~ \mbox{and} ~ s-t \in G_{F(p)}
\end{equation}

Let $ X(P, \vartheta) = ( \mathbb{Z}_{2}^{n-1} \times P )/\sim $ be the quotient space. Whenever there is
no ambiguity we denote $X(P, \vartheta)$ by $X$.
Then $ X $ is a $ \ZZ_{2}^{n-1} $-space with the orbit map
\begin{equation}
 \pi: X \to P ~\mbox{defined by} ~ \pi([t,p]^{\sim}) = p.
\end{equation}
Let $ \hat{\pi} :\mathbb{Z}_{2}^{n-1} \times P \to X $
be the quotient map. Let $B^n$ be the open ball of radius $1$ in $\RR^n$.

We claim that the space $ X $ has a smooth orbifold structure.
To prove our claim we construct a smooth orbifold atlas. We show that for each vertex $ v $ of $ P $ 
there exists an orbifold chart $(B^n, \ZZ_2, \phi_v)$ of $ X_v $ where $\phi_v (B^n)$ is an open subset $X_v$
of $ X $ and $$\{ X_v : v \in V(P)\} $$ cover $ X $. To show the compatibility of these charts as $v$ varies over $V(P)$,
we introduce some additional orbifold charts to make this collection an orbifold atlas.

Let $v \in V(P)$ and $ U_v $ be the open subset of $ P $ obtained by deleting all faces of $P$
not containing $ v $. Let $$ X_v := \pi^{-1}(U_v) = (\mathbb{Z}_{2}^{n-1} \times U_v)/\sim .$$
The subset $ U_v $ is diffeomorphic as manifold with corners to
\begin{equation}
 B_1^{n} = \{x= (x_1, x_2, \ldots, x_n) \in \RR_{\geq 0}^{n} :  \Sigma^{^{n}}_{_{1}} x_j < 1\}.
\end{equation}
Let $ f_v : B_1^n \to U_v $ be the diffeomorphism. Let the facets
$$ \{x_1 = 0\} \cap B_1^{n},~ \{x_2 =0 \}\cap B_1^{n}, ~ \ldots, ~\{ x_n = 0\} \cap B_1^{n} $$
of $B^n_1$ map to the facets $ F_{i_{1}}, F_{i_{2}}, \ldots, F_{i_{n}} $ of $U_v$
respectively under the diffeomorphism $ f_v $. Then $F_{i_1}\cap F_{i_2} \cap \ldots \cap F_{i_n} =v $.
Define an equivalence relation $\sim_{0}$ on $ \mathbb{Z}_{2}^{n-1} \times B_1^{n} $ by
\begin{equation}\label{eq1}
(t, x) \sim_{0} (s,y) ~ \mbox{if} ~ x=y ~ \mbox{and} ~ s-t \in G_{F(f_{v}(x))}
\end{equation}
Let $Y_0 = (\mathbb{Z}_{2}^{n-1} \times B_1^{n}) / \sim_0 $ be the quotient space with the
orbit map $\pi_0 : Y_0 \to B_1^n$. Let $\hat \pi_0 : \mathbb{Z}_{2}^{n-1} \times B_1^{n} \to Y_{0} $
be the quotient map. The diffeomorphism 
$ {id\times f_v}$ 
 descends to the following commutative diagram.
\begin{equation}
\begin{CD}
\mathbb{Z}_{2}^{n-1} \times B_1^n @>{id\times f_v}>> \mathbb{Z}_{2}^{n-1} \times U_v \\
@V\hat \pi_0 VV  @V\hat \pi_v VV \\
 Y_0 @>\hat f_v >> X_v
\end{CD}
\end{equation}
Here $ \hat \pi_v $ is the map $\hat \pi$ restricted to $\mathbb{Z}_{2}^{n-1} \times U_v$. It is easy to 
observe that the map $ \hat f_v $ is a bijection. Since the maps $ \hat \pi_v $ and $\hat \pi_0$ are continuous
and the map $id\times f_v$ is a diffeomorphism, the map $\hat f_v $ is a homeomorphism.

Let $ a \in [0,1) $ and $ H_a $ be the hyperplane $\{ \Sigma^{^{n}}_{_{1}} x_j = a  \} $ in  $ \RR^{n} $.
Then $ P_0 = H_0 \cap B_1^n $ is the origin of $\RR^n$ and $ P_a = H_a \cap B_1^{n} $ is an $ (n-1) $-simplex for each
$ a \in (0,1) $. When $a \in (0,1)$, the facets of $ P_a $ are $$ \{F_{a_j} := \{x_j = 0\} \cap P_a ;~ j=1,2, \ldots, n\}.$$
The map 
\begin{equation}
 \vartheta_{a}: \{F_{a_j}: j=1, \ldots, n\} \to \ZZ_2^{n-1}~ \mbox{defined by} ~ \vartheta_{a}({F_{a_j}}) = \vartheta_{i_j}
\end{equation}
satisfies the following condition.
\begin{equation}
\begin{array}{ll} \mbox{If} ~ F_a  ~ \mbox{is the intersection of unique $l$ ($0 \leq l \leq n-1$) facets} ~
F_{a_{j_1}}, \ldots, F_{a_{j_l}} ~ \mbox{of} ~ P_a\\
\mbox{then the vectors} ~\vartheta_{a}({F_{a_{j_1}}}), \ldots, \vartheta_{a}({F_{a_{j_l}}})~ 
\mbox{are linearly independent vectors of}~ \mathbb{F}_{2}^{n-1}.
\end{array}
\end{equation}

Hence $ \vartheta_{a} $ is a characteristic function of a small cover over the polytope $P_a$. Since $P_a$ is an
$(n-1)$-simplex, the small cover corresponding to the characteristic pair $(P_a, \vartheta_{a})$
is equivariantly diffeomorphic to the real projective space $ \mathbb{RP}^{n-1} $, see \cite{DJ}.
 Here we consider $\mathbb{RP}^{n-1}$ as the identification space
$\{\overline{B}^{n-1}/ \{x = -x\} : x\in \partial{\overline{B}^{n-1}}\}.$ So at each
point $ (a,0, \ldots, 0)\in B_1^n -\{0\}$ we get an equivariant homeomorphism
\begin{equation}
 (\mathbb{Z}_{2}^{n-1} \times P_a)/\sim_{0} \cong ~\mathbb{RP}^{n-1},
\end{equation}
which sends the fixed point $[t,a]^{\sim_0}$ to the origin of $\overline{B}^{n-1}$.
It is clear from the definition of the equivalence relation $ \sim_{0} $ that at $(0, \ldots, 0) \in B_1^n $, 
$ (\mathbb{Z}_{2}^{n-1} \times P_0)/\sim_{0} $ is a point. Hence $ Y_0 $ is equivariantly
homeomorphic to the open cone
$$ (\mathbb{RP}^{n-1} \times [0,1))/\mathbb{RP}^{n-1} \times \{0\} $$
on real projective space $ \mathbb{RP}^{n-1}$. Consider the following map
$$ S^{n-1} \times [0,1) \to B^n ~\mbox{ define by}~  ((x_1, x_2, \ldots , x_n), r) \to (rx_1, rx_2, \ldots , rx_n). $$
This map induces a homeomorphism $ f : B^n \to (S^{n-1} \times [0,1))/S^{n-1} \times \{0\} $.
The covering map $ S^{n-1} \to \mathbb{RP}^{n-1} $ induces a projection map
$$ \phi_0 :(S^{n-1} \times [0,1))/S^{n-1} \times \{0\} \to (\mathbb{RP}^{n-1} \times [0,1))/\mathbb{RP}^{n-1} \times \{0\}.$$
Observe that this projection map $ \phi_0 $ is nothing but the orbit map $\mathfrak{q}$ of the antipodal
action of $ \ZZ_2 $ on $ B^n $. In other words the following diagram is commutative.
\begin{equation}
\begin{CD}
B^n @>f>> (S^{n-1} \times [0,1))/S^{n-1} \times \{0\} \\
@V\mathfrak{q}VV  @V\phi_0VV \\
B^n /\ZZ_2 @>\hat f>> (\mathbb{RP}^{n-1} \times [0,1))/\mathbb{RP}^{n-1} \times \{0\}
\end{CD}
\end{equation}
Since the map $\phi_0$ is induced from the antipodal action on $S^{n-1}$ the commutativity of the diagram
ensure that the map $ \hat f $ is a homeomorphism. Let $\phi_v $ be the composition of the following maps.
\begin{equation}
\begin{CD}
B^n @>\mathfrak{q}>> B^n /\ZZ_2 @>\hat f>> (\mathbb{RP}^{n-1} \times [0,1))/\mathbb{RP}^{n-1} \times \{0\} @>\cong>> Y_0
@>\hat{f_v}>>X_v.
\end{CD}
\end{equation}
Hence $ (B^n, \ZZ_2, \phi_v) $ is an orbifold chart of $ X_v $ corresponding to the vertex $ v $ of the polytope.

Now we introduce some additional orbifold charts corresponding to each face $ F $ of codimension-$k$ ($ 0 < k < n $)
and the interior of polytope $P$. Let $$ U_F = \bigcap U_v, $$ where the intersection is
over all vertices $ v $ of $ F $. Let $ X_F := {\pi^{-1}}(U_F) $.

Fix a vertex $v$ of $F$. Consider the diffeomorphism $ f_v: B_1^n \to U_v $.
Observe that $U_F$ can be obtained from $U_v$ by deleting unique $n-k$ facets of $U_v$.
Let $F_{l_1}, \ldots, F_{l_{n-k}}$ be the facets of $U_v$ such that
$$ U_F = U_v - \{F_{l_1} \cup \ldots \cup F_{l_{n-k}}\}, $$
where $\{l_1, \ldots, l_{n-k}\} \subset \{1, 2, \ldots, n\}$. Let $B_F^n = f_v^{-1}(U_F) $.
Let $\{x_{l_1}=0\}, \ldots, \{x_{l_{n-k}}=0\}$ be the coordinate hyperplanes in $\RR^n$ such that
$$ f_v(\{x_{l_1} =0\} \cap B_1^n) = F_{l_1},~ \ldots, ~ f_v(\{x_{l_{n-k}} =0\} \cap B_1^n ) = F_{l_{n-k}}.$$
So $ B_F^n = B_1^n - \{ \{x_{l_1}=0\} \cup \ldots \cup \{x_{l_{n-k}} =0\} \}$.
Then $ \hat{f}_v(\pi_0^{-1}(B_F)) = X_F$.

Let $a \in (0,1)$ and $P_a^{\prime} = P_a - \{x_{l_1} =0\}$. Since $(P_a, \vartheta_{a})$ is a
characteristic pair, there exist an equivariant homeomorphism from
$(\ZZ_2^{n-1} \times P_a^{\prime})/ \sim_0$ to $B^{n-1}$ such that
$(\ZZ_2^{n-1} \times F_{a_j})/ \sim_0$ maps to a coordinate hyperplane
$H_j := \{x_{i_j}=0\} \cap B^{n-1}$,
for $j \in \{\{1, 2, \ldots, n\} - l_1\} $. Clearly $H_i \neq H_j$ for $i \neq j$.

Let $P_a^{\prime \prime} = P_a^{\prime} - \{\{x_{l_2} =0\} \cup \ldots \cup \{x_{l_{n-k}}=0\}\}$. Then
$$(\ZZ_2^{n-1} \times P_a^{\prime \prime})/ \sim_0~ \cong B^{n-1} -\{H_{l_2} \cup \ldots \cup H_{l_{n-k}}\}~
\mbox{and} ~ B_F^n \cong (0,1) \times P_a^{\prime \prime}.$$ So
$\pi_0^{-1}(B_F^n) = (\ZZ_2^{n-1} \times B_F^n)/\sim_0$ is homeomorphic to
$$ (0,1) \times \{(\ZZ_2^{n-1} \times P_a^{\prime \prime})/ \sim_0\}
~ \cong ~ (0,1) \times \{B^{n-1} -\{H_{l_2}^{\prime} \cup \ldots \cup H_{l_{n-k}}^{\prime} \}\}.$$
By our assumption
$$ (0,1) \times \{B^{n-1} -\{H_{l_2} \cup \ldots \cup H_{l_{n-k}} \}\} \hookrightarrow
(\mathbb{RP}^{n-1} \times [0,1))/\mathbb{RP}^{n-1} \times \{0\}.$$
So there exist two open subsets $D_F, D_F^{ \prime} $ of $B^n$ such that
$D_F^{\prime } = - D_F$ and the following restrictions are homeomorphism.
\begin{enumerate}
\item $\phi_0 \circ f|_{D_F} : D_F \to (0,1) \times \{B^{n-1} -\{H_{l_2} \cup \ldots \cup H_{l_{n-k}} \}\}$.

\item $\phi_0 \circ f|_{D_F^{\prime}} : D_F^{\prime} \to (0,1) \times \{B^{n-1} -\{H_{l_2} \cup \ldots \cup H_{l_{n-k}} \}\}$.
\end{enumerate}

Hence the restriction  $\phi_v|_{D_F} : D_F \to X_F$ is homeomorphism. Clearly
\begin{equation}
  D_F \cong \{\{B^n \cap \{x_n > 0\}\} - \cup_{j=1, x_{l_j} \neq x_n}^{(n-k-1)} \{{x_{l_j} = 0}\} \}.
\end{equation}
The set $D_F$ is homeomorphic to an open ball in $\RR^n$ if $k=n-1$.
When $k=n-1$, $F$ is an edge of the polytope $P$.
Let $E(P)$ be the set of edges of the polytope $P$  and $e \in E(P)$.
Let $\phi_{e_v} = \phi_v|_{D_e} : D_{e} \to X_e $, where $v \in V(e)$.
Hence $(D_{e}, \{0\}, \phi_{e_v}) $ is an orbifold chart on $X_{e}$ for each $ e \in E(P)$ and $v \in V(e)$.

The set $D_F$ is disjoint union of open sets $\{B_{F(i)}: i= 1, \ldots, 2(n-k-1)\} $ 
in $\RR^n$ whenever $ 0 < k < n-1$. Here all $ B_{F(i)} $ are homeomorphic to an open ball in $\RR^n$. Let
\begin{equation}
\phi_{F_v(i)} = \phi_v|_{B_{F(i)}} : B_{F(i)} \to X_F
\end{equation}
be the restriction of the map $\phi_v$ to the domain $B_{F(i)}$, where $v \in V(F)$.
So for each $(i,v) \in \{1,2, \ldots, 2(n-k-1)\} \times V(F)$, the triple $ (B_{F(i)}, \{0\}, \phi_{F_v(i)}) $
is an orbifold chart on the image of $ \phi_{F_v(i)}$ in $X_F$.

Let $P^0$ be the interior of $P$ and $X_P = \pi^{-1}(P^0)$. Hence the set
\begin{equation}
D_P = \{\{B^n \cap \{x_n > 0\}\} - \cup_{j=1}^{n-1} \{{x_{j} = 0}\}\}
\end{equation}
is homeomorphic to $ X_P$ under the restriction of $\phi_v$ on $D_P$. 
The set $D_P$ is a disjoint union of connected open sets $\{B_{j}: j= 1, \ldots, 2(n-1) \} $ in $\RR^n$
where each $B_j$ is homeomorphic to the open ball $B^n$. Let 
\begin{equation}
 \phi_{P_v(j)}=\phi_v|_{B_{j}} : B_{j} \to X_P
\end{equation}
be the restriction of the map $ \phi_P$ to the domain $B_{j}$.
Hence for $(j,v) \in \{1, 2, \ldots, 2(n-1)\} \times V(P)$, $ (B_{j}, \{0\}, \phi_{P_v(j)}) $ is an orbifold chart
on the image of $ \phi_{P_v(j)}$ in $X_P$. Let
\begin{equation}
\mathfrak{U} = \{(B^n, \ZZ_{2}, \phi_v)\} \cup \{(D_{e}, \{0\}, \phi_{e_v})\} \cup \{(B_{F(i)}, \{0\}, \phi_{F_v(i)})\}
\cup \{(B_j, \{0\}, \phi_{P_v(j)})\}
\end{equation}
where $v \in V(P)$, $e \in E(P)$, $F$ run over the faces of codimension $k$ ($0 < k < n-1$), $i=1,\ldots, 2(n-k-1)$
and  $j=1,\ldots, 2(n-1)$. From the description of orbifold charts corresponding to each faces and interior
of the polytope, it is clear that $\mathfrak{U}$ is an orbifold atlas on $ X $. 
Clearly the inclusions $D_e \hookrightarrow B^n$, $B_{F(i)} \hookrightarrow B^n$ and $B_j \hookrightarrow B^n$
induce the following smooth embeddings respectively:
$$ (D_{e}, \{0\}, \phi_{e_v}) \hookrightarrow (B^n, \ZZ_{2}, \phi_v),
(B_{F(i)}, \{0\}, \phi_{F_v(i)}) \hookrightarrow (B^n, \ZZ_{2}, \phi_v)$$
$$\mbox{and} ~ (B_j, \{0\}, \phi_{P_v(j)}) \hookrightarrow (B^n, \ZZ_{2}, \phi_v).$$
Thus $\mathcal{X}(P, \vartheta) := (X, \mathfrak{U})$ is a smooth orbifold. We denote $\mathcal{X}(P, \vartheta)$
by $\mathcal{X}$ whenever there is no confusion.
\begin{defn}
We call the smooth orbifold $\mathcal{X}(P, \vartheta)$ small orbifold of associated to the
s-characteristic pair $(P, \vartheta)$.
\end{defn}

\begin{remark}
\begin{enumerate}
\item The small orbifold $\mathcal{X}(P, \vartheta)$ is reduced, that is, the group in each chart has an effective action.
Singular set of the orbifold $\mathcal{X}(P, \vartheta)$ is
$$ \Sigma \mathcal{X}(P, \vartheta) = \{[t,v]^{\sim} \in X : v \in V(P)\} .$$
We call an element of $ \Sigma \mathcal{X}(P, \vartheta)$ an orbifold point of $X$.

\item We can not define an s-characteristic function for an arbitrary polytope. For example, the 
$3$-simplex in $\RR^3$ does not admit an s-characteristic function.

\item The small orbifold $X$ is compact and connected.

\end{enumerate}
\end{remark}

\begin{example}\label{eg01}
Let $P^2$ be a simple $2$-polytope in $\RR^2$.
Define
\begin{equation}
\vartheta : \mathcal{F}(P^2) \to \mathbb{Z}_{2} ~ \mbox{by} ~ \vartheta(F) = 1, \forall F \in \mathcal{F}(P^2).
\end{equation}
So $ \vartheta $ is the s-characteristic function of $P^2$. The resulting quotient space $ X(P^2, \vartheta) $
is homeomorphic to the sphere $S^2$. These are the only cases where the identification space is a topological manifold.
\end{example}

\begin{example}\label{eg02}
\begin{figure}[ht]
        \centerline{
           \scalebox{.80}{
            \input{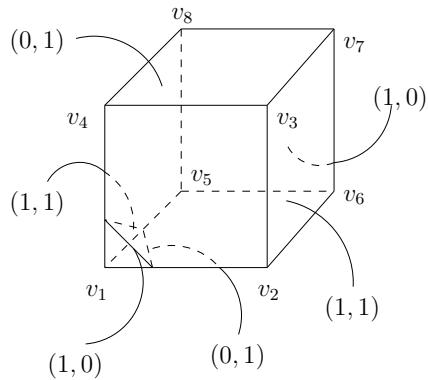}
            }
          }
 \caption{An s-characteristic function of $I^3$.}
 \label{fig-eg2}
 \end{figure}
Let $ I^3 = \{(x,y,z)\in \RR^3 : 0 \leq x,y,z \leq 1\} $ be the standard cube in $ \RR^3 $.
Let $v_1, \ldots, v_8$ be the vertices of $ I^3$, see figure \ref{fig-eg2}.
So the facets of $I^3$ are the following squares
$$F_1 = v_1v_2v_3v_4, F_2 = v_1v_2v_6v_5, F_3 = v_1v_5v_8v_4, F_4 = v_2v_6v_3v_7, F_5 =v_4v_3v_7v_8, F_6 =v_5v_6v_7v_8.$$
Define $ \vartheta : \mathcal{F}(I^3) \to \mathbb{Z}_{2}^2 $ by
$$
\vartheta(F_1) = \vartheta(F_6) = (1,0), ~ \vartheta(F_2) = \vartheta(F_5) = (0, 1),~ \vartheta(F_3) = \vartheta(F_4) =(1, 1).
$$
Hence $ \vartheta $ is an s-characteristic function of $I^3$. Then
$$G_{F_1} = G_{F_6} = \{(0,0),(1,0)\},~ G_{F_2} = G_{F_5} = \{(0,0),(0,1)\},~ G_{F_3} = G_{F_4} = \{(0,0),(1,1)\}.$$

For other proper face $F$ of $ I^3 $, $G_F = \mathbb{Z}_{2}^2 $.
Hence $\mathcal{X}(I^3, \vartheta)$ is a $3$-dimensional small orbifold.
\end{example}

\begin{obser}\label{obssub}
Let $F$ be a codimension-$k$ ($0 < k < n-1$) face of $P$. Then 
$F$ is a simple polytope of dimension $n-k$.
Let $\mathcal{F}(F) = \{F_{j_1}^{\prime}, \ldots, F_{j_l}^{\prime}\}$
be the set of facets of $F$. So there exist unique facets $F_{j_1}, \ldots, F_{j_l} $ of $P$ such that
$$F_{j_1} \cap F =  F_{j_1}^{\prime},~ \ldots, ~ F_{j_l} \cap F = F_{j_l}^{\prime}.$$
Fix an isomorphism $\mathfrak{b} $ from the quotient field $ \mathbb{F}_2^{n-1}/ G_F $ to $ \mathbb{F}_2^{n-1-k}$.
Define a function 
$$\vartheta^{\prime} : \mathcal{F}(F) \to \mathbb{Z}_2^{n-1-k} ~ \mbox{by}
~ \vartheta^{\prime}(F_{j_i}^{\prime}) = \mathfrak{b}(\vartheta(F_{j_i}) + G_F).$$
Observe that the function $\vartheta^{\prime}$ is an s-characteristic function of $F$.
Let $\sim^{\prime}$ be the restriction of $\sim$ on $\ZZ_2^{n-1-k} \times F$.
So $\mathcal{X}(F, \vartheta^{\prime})$ is an $(n-k)$-dimensional smooth small orbifold associated to the
s-characteristic pair $(F, \vartheta^{\prime})$. The orbifold $\mathcal{X}(F, \vartheta^{\prime})$ is a suborbifold
of $\mathcal{X}(P, \vartheta)$ and the underlying space $X(F, \vartheta^{\prime}) = \pi^{-1}(F)$. We have shown that for each edge $e$ of $P$, the set $X_e$ is homeomorphic to the
open ball $B^n$. Let $e^{\prime}$ be an edge of $F$ and $U_{e^{\prime}}^{\prime} = U_{e^{\prime}} \cap F$. Hence
$$W_{e^{\prime}} = (\ZZ_2^{n-1-k} \times U_{e^{\prime}}^{\prime})/ \sim^{\prime} 
= (\ZZ_2^{n-1} \times U_{e^{\prime}}^{\prime})/ \sim $$
is homeomorphic to the open ball $B^{n-k}$.
\end{obser}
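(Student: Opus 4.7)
The plan is to verify each assertion of the observation in turn, leaning on the construction of section \ref{def} applied to the pair $(F, \vartheta^{\prime})$. That $F$ is a simple $(n-k)$-polytope is standard polytope theory. For the s-characteristic property of $\vartheta^{\prime}$, fix a vertex $v$ of $F$. As a vertex of the simple polytope $P$, $v$ lies on exactly $n$ facets $F_{i_1}, \ldots, F_{i_n}$; relabel so that $F_{i_1}, \ldots, F_{i_k}$ are the facets of $P$ containing $F$ (whose $\vartheta$-values span $G_F$) and the remaining $n-k$ facets restrict to the facets of $F$ meeting at $v$. The s-characteristic condition for $\vartheta$ at $v$ says that omitting any one vector from $\{\vartheta_{i_r}\}_{r=1}^{n}$ leaves a basis of $\mathbb{F}_2^{n-1}$. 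Omitting one of the last $n-k$ vectors, the remaining $n-1$ vectors include the $k$ generators of $G_F$, so their images modulo $G_F$ form a basis of $\mathbb{F}_2^{n-1}/G_F$; applying $\mathfrak{b}$ yields a basis of $\mathbb{F}_2^{n-1-k}$, which is precisely the s-characteristic condition for $\vartheta^{\prime}$.

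With $(F, \vartheta^{\prime})$ an s-characteristic pair, the construction of section \ref{def} produces an $(n-k)$-dimensional smooth small orbifold $\mathcal{X}(F, \vartheta^{\prime})$ with underlying space $(\mathbb{Z}_2^{n-1-k} \times F)/\sim^{\prime}$. To identify this with $\pi^{-1}(F) \subset X(P, \vartheta)$, observe that for any $p \in F$ the face $F(p)$ of $P$ containing $p$ in its relative interior satisfies $F(p) \supseteq F$, whence $G_{F(p)} \supseteq G_F$. Consequently the composition $\mathbb{Z}_2^{n-1} \times F \hookrightarrow \mathbb{Z}_2^{n-1} \times P \xrightarrow{\hat \pi} X$ descends through $(\mathbb{Z}_2^{n-1}/G_F) \times F$, which under $\mathfrak{b}$ becomes $\mathbb{Z}_2^{n-1-k} \times F$; the induced equivalence relation is precisely $\sim^{\prime}$. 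This realises $\pi^{-1}(F)$ as $X(F, \vartheta^{\prime})$ and displays $\mathcal{X}(F, \vartheta^{\prime})$ as a suborbifold of $\mathcal{X}(P, \vartheta)$: at a vertex $v$ of $F$, the chart $(B^{n-k}, \mathbb{Z}_2, \phi_v^{F})$ embeds into $(B^n, \mathbb{Z}_2, \phi_v)$ along the coordinate subspace corresponding to the $n-k$ facets of $P$ at $v$ that restrict to facets of $F$, and the edge, codimension-$j$ face, and interior charts embed analogously by restriction.

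The final claim, $W_{e^{\prime}} \cong B^{n-k}$, follows by applying within $\mathcal{X}(F, \vartheta^{\prime})$ the fact already established for $\mathcal{X}(P, \vartheta)$: that the preimage of the star of an edge is an open ball of the ambient dimension. The two expressions for $W_{e^{\prime}}$ as quotients of $\mathbb{Z}_2^{n-1-k} \times U_{e^{\prime}}^{\prime}$ and of $\mathbb{Z}_2^{n-1} \times U_{e^{\prime}}^{\prime}$ coincide by the identification of underlying spaces above. The main obstacle is the bookkeeping in that identification, namely verifying cleanly that reducing each subgroup $G_{F(p)}$ modulo $G_F$ and applying $\mathfrak{b}$ recovers the subgroup associated to the corresponding face of $F$ under $\vartheta^{\prime}$; once this is in place, every remaining piece follows by rerunning the orbifold atlas construction of section \ref{def} verbatim for $(F, \vartheta^{\prime})$.
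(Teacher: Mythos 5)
Your verification is correct and supplies exactly the routine checks the paper leaves implicit: the paper states this as an Observation without proof, treating it as an immediate consequence of the definitions in Section 2, and your argument (basis-of-quotient linear algebra for the s-characteristic property of $\vartheta'$, descent of the quotient through $G_F$, and rerunning the atlas construction for $(F,\vartheta')$) is the natural way to fill that in.

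One small slip worth fixing: for $p \in F$ the face $F(p)$ of $P$ whose relative interior contains $p$ satisfies $F(p) \subseteq F$ (not $F(p) \supseteq F$ as you wrote); a smaller face is the intersection of \emph{more} facets, so $G_{F(p)}$ is spanned by a superset of $\{\vartheta_{i_1}, \ldots, \vartheta_{i_k}\}$, which is what actually gives $G_{F(p)} \supseteq G_F$. The conclusion you drew is the right one, so the rest of your descent argument is unaffected, but the stated inclusion is reversed.
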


\section{Orbifold fundamental group}\label{orbf}
Orbifold cover and orbifold fundamental group was introduced by Thurston in \cite{Th}.
In this section we compute the universal orbifold cover and orbifold fundamental group of an $n$-dimensional
($n \geq 3$) small orbifold $ \mathcal{X} $ over $P\subset \RR^n$.
\begin{defn}
A covering orbifold or orbifold cover of an $n$-dimensional orbifold $\mathcal{Z}$ is a
smooth map of orbifolds  $\mathfrak{g}: \mathcal{Y} \to \mathcal{Z}$
whose associated continuous  map $g: Y \to Z$ between underlying spaces satisfies
the following condition.

 Each point $z \in Z$ has a neighborhood $U \cong V/\Gamma$ with $V$ homeomorphic to a connected open set in $\RR^n$,
for which each component $W_i$ of $g^{-1}(U)$ is homeomorphic to $V/\Gamma_i$ for some subgroup $\Gamma_i \subset \Gamma$
such that the natural map $g_i: V/\Gamma_i \to V/\Gamma$ corresponds to the restriction of $g$ on $W_i$.
\end{defn}
\begin{defn}
Given an orbifold cover $\mathfrak{g}: \mathcal{Y} \to \mathcal{Z}$ a diffeomorphism
 $\mathfrak{h}: \mathcal{Y} \to \mathcal{Y}$ is called a deck transformation if
$\mathfrak{g} \circ \mathfrak{h} = \mathfrak{g}$.
\end{defn}
\begin{defn}
An orbifold cover $\mathfrak{g}: \mathcal{Y} \to \mathcal{Z}$ is called a universal
orbifold cover of $\mathcal{Z}$ if given any orbifold cover $\mathfrak{g}_1:\mathcal{W} \to \mathcal{Z}$,
there exists an orbifold cover $\mathfrak{g}_2: \mathcal{Y} \to \mathcal{W}$
such that $\mathfrak{g} = \mathfrak{g}_1 \circ \mathfrak{g}_2$.
\end{defn}
Every orbifold has a universal orbifold cover which is unique up to diffeomorphism, see \cite{Th}.
The corresponding group of deck transformations is called the orbifold fundamental group of
$\mathcal{Z}$ and denoted $\pi_1^{\rm orb} (\mathcal{Z})$.

The set of smooth points $$ M := X - \Sigma \mathcal{X}$$ of small
orbifold $ \mathcal{X} $ is an $n$-dimensional manifold. For each $v \in V(P)$ we have
$$X_v - [0,v]^{\sim} \cong \mathbb{RP}^{n-1} \times I^0.$$
The sphere $S^{n-1}$ is the double sheeted universal cover of $\mathbb{RP}^{n-1}$.
So the universal cover of $ X_v -[0, v]^{\sim}$ is $S^{n-1} \times I^0 \cong B^n - 0$.
Let $e$ be an edge containing the vertex $v$ of $P$. Define $\bar{e} : = e \cap U_v$.

Identifying the faces containing the edge $\bar{e}$ of $U_v$ according to the equivalence relation $\sim$
we get the quotient space $ X_{\bar{e}}(U_v, \vartheta) $ homeomorphic to
$$B^n_{e} := \{(x_1, x_2, \ldots, x_n) \in B^n : x_n \geq 0\}.$$
The set $ X_v $ is obtained from $ X_{\bar{e}}(U_v, \vartheta) $ by identifying the antipodal
points of the boundary of $ X_{\bar{e}}(U_v, \vartheta) $ around the fixed point $ [a,v]^{\sim} $.
Identifying two copies of $ X_{\bar{e}}(U_v, \vartheta) $ along their boundary via the antipodal map
on the boundary we get a space homeomorphic to $ B^n $.

Doing these identification associated to the orbifold points we obtain that the universal
cover of $ M $ is homeomorphic to $ \RR^n - N $ for some infinite subset $ N $ of $ \ZZ^n $ where $N$
depends on the polytope $P$ in $ \RR^n $. Let
\begin{equation}
\zeta: \RR^n - N \to M
\end{equation}
be the universal covering map.

The chart maps $\phi_v : B^n \to X_v$ are uniformly continuous and $P$ is an
$n$-polytope in $\RR^n$. So for each $x \in N$ there exists a neighborhood $V_x \subset \RR^n$
of $x$ such that the restriction of the universal covering map $\zeta$ on $V_x -x$ is uniformly
continuous. Hence the map $\zeta$ has a unique extension, say $ \hat{\zeta} $, on their metric
completion. The metric completion of $ \RR^n - N $ and $ M $ are $ \RR^n $ and
$ X$ respectively. The map $ \hat{\zeta} $ sends $ N $ onto $ V(P)$.

We show the map $ \hat{\zeta} $ is an orbifold covering.
Let $ \varrho : \mathcal{Z} \to \mathcal{X} $ be an orbifold cover.
Then the restriction $ \varrho : Z - \Sigma \mathcal{Z} \to M $ is an honest cover.
Hence there exist a covering map $ \zeta_{\varrho} : \RR^n - N \to Z - \Sigma \mathcal{Z} $
so that the following diagram is commutative.
\begin{equation}\label{unicov}
\begin{CD}
\RR^n - N @>\zeta_{\varrho}>> Z - \Sigma \mathcal{Z} \\
@V\zeta VV  @V \varrho VV \\
M @>id>> M
\end{CD}
\end{equation}

Since the map $\zeta$ is locally uniformly continuous and the maps $\zeta_{\varrho}, \varrho$ are continuous,
all the maps in the diagram \ref{unicov} can be extended to their metric completion.
That is we get a commutative diagram of orbifold coverings.
\begin{equation}
\begin{CD}
\RR^n @>\hat{\zeta}_{\varrho}>> \mathcal{Z}\\
@V\hat{\zeta}VV  @V\hat{\varrho}VV\\
\mathcal{X} @>id>> \mathcal{X}
\end{CD}
\end{equation}
Hence  $ \hat{\zeta}: \RR^n \to \mathcal{X}$ is an orbifold universal cover of $ \mathcal{X}$. Clearly the map
$ \hat{\zeta} $ is a smooth map.
\begin{theorem}\label{uniorb}
The universal orbifold cover of an $n$-dimensional small orbifold is diffeomorphic to $\RR^n$.
\end{theorem}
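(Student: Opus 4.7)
The plan is to follow the strategy suggested by the local chart description already established: each vertex chart is $(B^n, \ZZ_2, \phi_v)$ with antipodal $\ZZ_2$ action, so each orbifold point has link $\mathbb{RP}^{n-1}$. For $n \geq 3$ this link is simply connected after passing to its double cover $S^{n-1}$, so the universal orbifold cover should be built by replacing each orbifold chart by a single smooth ball and patching these balls together over $P$. Concretely, the steps are (i) take the ordinary universal cover of the smooth locus, (ii) identify it with $\RR^n - N$ for a discrete subset $N$, (iii) extend by metric completion to a smooth map $\hat\zeta : \RR^n \to X$, and (iv) verify universality among orbifold covers.

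First I would analyze the smooth locus $M = X - \Sigma\mathcal{X}$. Near each vertex $v$ the punctured chart $X_v - \{[0,v]^{\sim}\}$ is homeomorphic to $\mathbb{RP}^{n-1} \times (0,1)$, which for $n \geq 3$ has fundamental group $\ZZ_2$ with universal cover the punctured ball $B^n - \{0\}$. Globally, the universal cover $\zeta : \tilde{M} \to M$ therefore resolves exactly the $\ZZ_2$ loops around each orbifold point. I would then argue inductively that $\tilde{M}$ is homeomorphic to $\RR^n - N$ for a discrete $N \subset \RR^n$ in bijection with the preimage of $V(P)$: start from one lifted vertex chart $B^n - \{0\}$, and successively attach lifted neighboring vertex charts along the relations imposed by $\sim$ on the codimension-$k$ faces of $P$. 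The suborbifold structure of Observation \ref{obssub} supplies the inductive step and guarantees, using the s-characteristic condition, that at each face exactly one new sheet of the cover is added, so that the whole assembly embeds as $\RR^n$ minus the lifted vertex set.

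Next I would extend $\zeta$ to $\hat\zeta : \RR^n \to X$ using metric completion. Because each $\phi_v$ is uniformly continuous on bounded sets, for every $x \in N$ there is a Euclidean ball $V_x$ on which $\zeta|_{V_x - x}$ is uniformly continuous and hence admits a unique continuous extension sending $x$ to a vertex of $P$. Smoothness at an added point is local and reduces, after a linear identification of $V_x$ with $B^n$, to the standard quotient $B^n \to B^n / \ZZ_2 \cong X_v$, which is the chart $\phi_v$. Hence $\hat\zeta$ is a smooth orbifold cover with added points $N$ forming the preimage of the singular set. For universality, given any orbifold cover $\varrho : \mathcal{Z} \to \mathcal{X}$, its restriction to smooth loci is an ordinary cover of $M$, so by universality of $\zeta$ there is a topological cover $\zeta_\varrho : \RR^n - N \to Z - \Sigma\mathcal{Z}$ with $\varrho \circ \zeta_\varrho = \zeta$; the same metric-completion argument extends this to an orbifold cover $\hat\zeta_\varrho : \RR^n \to \mathcal{Z}$ with $\hat\varrho \circ \hat\zeta_\varrho = \hat\zeta$, as in diagram \ref{unicov}.

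The hard part will be step (ii), rigorously identifying $\tilde{M}$ with the specific space $\RR^n - N$. A priori the gluing of lifted punctured balls could yield some other simply connected open $n$-manifold; ruling this out requires a careful combinatorial induction that uses the s-characteristic condition at every face and exploits the convexity of $P$ in $\RR^n$ so that the unfolded pieces can be laid out flat in Euclidean space. The other subtle point is the smoothness check for $\hat\zeta$ at the completed points, but this is essentially forced by the definition of the vertex chart and should follow once the homeomorphism type of $\tilde{M}$ is nailed down.
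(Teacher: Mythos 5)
Your proposal follows essentially the same route as the paper: pass to the universal cover of the smooth locus $M = X - \Sigma\mathcal{X}$, identify it with $\RR^n - N$, extend by metric completion to a map $\hat\zeta : \RR^n \to X$ using local uniform continuity of $\zeta$ near the removed points, and establish universality by lifting an arbitrary orbifold cover over $M$ and completing. The step you correctly flag as hardest — rigorously identifying the universal cover of $M$ with $\RR^n - N$ — is also handled only informally in the paper (via the local edge/half-ball gluing picture), so your concern is well-placed but does not represent a divergence from the paper's argument.
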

\begin{figure}[ht]
 \centerline{
  \scalebox{0.80}{
  \input{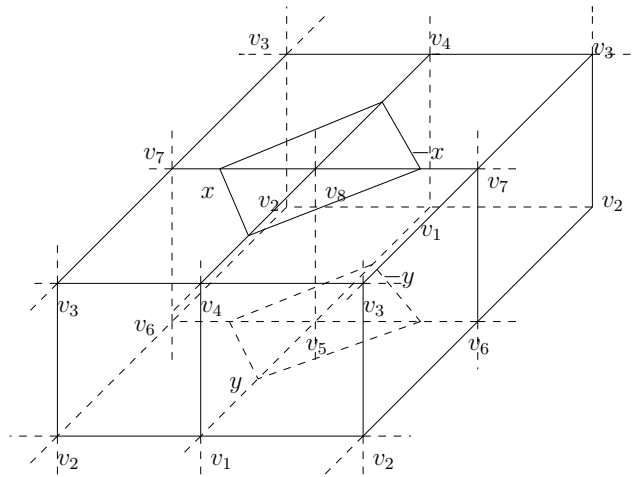}
   }
   }
 \caption {Identification of faces containing the edge $v_5v_8$ of $I^3$.}
  \label{fig-eg}
 \end{figure}
\begin{example}
Recall the small orbifold $X(I^3, \vartheta)$ of example \ref{eg02}. The set of smooth points
$$M(I^3, \vartheta) := X(I^3, \vartheta) - \Sigma \mathcal{X}(I^3, \vartheta)$$
is a $3$-dimensional manifold. The universal cover of $ M(I^3, \vartheta) $ is
homeomorphic to $\RR^3-\ZZ^3$. To show this we need to observe how the faces of $ \mathbb{Z}_{2}^2 \times I^3 $ are
identified by the equivalence relation $ \sim $ (see equation \ref{idenso}) on $ \mathbb{Z}_{2}^2 \times I^3 $.
For each $v \in V(I^3)$
$$ X_v(I^3, \vartheta) -[a,v]^{\sim} \cong \mathbb{RP}^2 \times I^0.$$
The sphere $S^2$ is the double sheeted universal cover of $\mathbb{RP}^2$. So the universal cover of
$ X_v(I^3, \vartheta) -[a,v]^{\sim}$ is $S^2 \times I^0 \cong B^3 - 0$. Hence the identification of faces
around each vertex of $I^3$ tells us that the universal cover of $M(I^3, \vartheta) $ is $ \RR^3 - \ZZ^3 $.
We illustrate the identification of faces by the figure \ref{fig-eg}, where $x \sim -x$ on the upper face
and $y \sim -y$ on the lower face in that figure.
\end{example}

Now we use the following observation from \cite{ALR} to compute the orbifold fundamental group
of $\mathcal{X}$.
\begin{obser}\label{orbgp2}
Suppose that $\mathfrak{u} : \mathcal{\widehat{Y}} \to \mathcal{Y}$  is an orbifold universal cover.
Then the restriction $\mathfrak{u} : \mathcal{\widehat{Y}} - \Sigma \mathcal{\widehat{Y}} \to
\mathcal{Y} - \Sigma \mathcal{Y}$ is an honest cover with $G = \pi_{1}^{\rm orb}(\mathcal{Y})$
as the orbifold covering group, where $ \Sigma \mathcal{Y} $ is the singular subset of $ \mathcal{Y} $.
Therefore $ \mathcal{Y} =  \mathcal{\widehat{Y}}/ G $.
\end{obser}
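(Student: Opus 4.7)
The plan is to prove Observation~\ref{orbgp2} by unwinding the definition of orbifold cover chart-by-chart. Around any point $y \in Y$, fix a local model $U \cong V/\Gamma$ with $V \subset \RR^n$ connected open and $\Gamma$ acting smoothly on $V$. By the definition of orbifold cover, every component $W_i$ of $\mathfrak{u}^{-1}(U)$ is of the form $V/\Gamma_i$ for some subgroup $\Gamma_i \subseteq \Gamma$, with the restriction $\mathfrak{u}\colon W_i \to U$ corresponding to the natural quotient $V/\Gamma_i \to V/\Gamma$. Write $V^{\rm sing} \subset V$ for the locus where $\Gamma$ acts with nontrivial isotropy; then $\Sigma\mathcal{Y} \cap U$ is the image of $V^{\rm sing}$ in $V/\Gamma$, and similarly $\Sigma\widehat{\mathcal{Y}} \cap W_i$ is the image of the $\Gamma_i$-nontrivial-isotropy locus in $V/\Gamma_i$.

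First I would show that the restriction $\mathfrak{u}\colon \widehat{\mathcal{Y}} - \Sigma\widehat{\mathcal{Y}} \to \mathcal{Y} - \Sigma\mathcal{Y}$ is an honest covering map. Since $\Gamma$ acts freely on $V - V^{\rm sing}$, so does each subgroup $\Gamma_i \subseteq \Gamma$. Consequently $(V - V^{\rm sing})/\Gamma_i$ and $(V - V^{\rm sing})/\Gamma$ are genuine manifolds, and the induced map between them is an unramified covering of degree $[\Gamma:\Gamma_i]$. Gluing these local coverings across the orbifold charts of $\mathcal{Y}$ yields the required honest covering on smooth loci.

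Next I would identify the deck transformation group of this restricted cover with $G = \pi_1^{\rm orb}(\mathcal{Y})$. By definition $G$ is the deck transformation group of the orbifold universal cover $\mathfrak{u}$, and each orbifold deck transformation restricts to an ordinary deck transformation of the smooth-locus cover. Conversely, since the smooth loci are dense, any honest deck transformation of the smooth-locus cover is determined on a dense set and extends across the singular stratum via $\Gamma_i$-equivariance in each local chart together with continuity (the same completion-type argument used for $\hat{\zeta}$ in the proof of Theorem~\ref{uniorb}). The identification $\mathcal{Y} = \widehat{\mathcal{Y}}/G$ then follows from regularity of the universal cover: $G$ acts transitively on the fibers of $\mathfrak{u}$, so $\widehat{\mathcal{Y}}/G \to \mathcal{Y}$ is a bijective morphism of orbifolds that is a local diffeomorphism in every chart (the subgroup relation $\Gamma_i \subseteq \Gamma$ supplies the chart $V/\Gamma_i \to V/\Gamma$ explicitly).

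The main obstacle is the extension step for deck transformations: although the smooth locus is dense in each component $W_i \cong V/\Gamma_i$, promoting an ordinary deck transformation of the smooth-locus cover to a bona fide orbifold diffeomorphism of $\widehat{\mathcal{Y}}$ requires compatibility with the local orbifold chart groups across the singular stratum. This amounts to checking that the universal orbifold cover is a Galois cover, which is standard in the orbifold literature (see \cite{Th, ALR}) but constitutes the only content beyond unwinding the definitions.
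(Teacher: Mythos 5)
The paper offers no proof of this statement at all: it is imported as an ``observation from \cite{ALR}'', so there is no internal argument to measure yours against. Your chart-by-chart reconstruction is the standard one and is essentially sound: off the singular locus each local group acts freely, so $(V - V^{\rm sing})/\Gamma_i \to (V - V^{\rm sing})/\Gamma$ is an honest covering of degree $[\Gamma:\Gamma_i]$, and being a covering is local on the base, so these patches glue.

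Two caveats. First, your local analysis actually establishes that $\mathfrak{u}$ restricted to $\mathfrak{u}^{-1}(\mathcal{Y} - \Sigma\mathcal{Y})$ is an honest cover of $\mathcal{Y} - \Sigma\mathcal{Y}$; this set can be strictly smaller than $\widehat{\mathcal{Y}} - \Sigma\widehat{\mathcal{Y}}$, since a point with trivial $\Gamma_i$-isotropy may lie over a point with nontrivial $\Gamma$-isotropy. The discrepancy is not hypothetical here: in the paper's application $\widehat{\mathcal{Y}} = \RR^n$ has empty singular set, while the honest cover used is $\RR^n - N \to M$. So both the statement and your first paragraph should be read with $\mathfrak{u}^{-1}(\mathcal{Y} - \Sigma\mathcal{Y})$ as the source. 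Second, as you acknowledge, the surjectivity of the restriction map onto the deck group of the punctured cover (extension of deck transformations across the singular stratum as orbifold maps) and the transitivity of $G$ on fibers (the Galois property) are the real content; the injectivity half and the equality $G = \pi_1^{\rm orb}(\mathcal{Y})$ are definitional under the convention used here, since $\pi_1^{\rm orb}$ is defined as the deck group of $\mathfrak{u}$. Deferring the Galois property to \cite{Th} and \cite{ALR} is exactly what the paper itself does for the whole observation, so no ground is lost, but be aware that this deferral is the entire substance of the claim rather than a technical afterthought.
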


Let $\{\beta_1, \beta_2, \ldots, \beta_m\}$ be the standard basis of $\ZZ_2^m$.
Define a map $ \beta : \mathcal{F}(P) \to \ZZ_2^m$ by $\beta(F_j) = \beta_j.$ For each face
$F = F_{j_1} \cap F_{j_2} \cap \ldots \cap F_{j_l}$, let $H_F$ be the subgroup
of $\ZZ_2^m$ generated by $\beta_{j_1}, \beta_{j_2}, \ldots, \beta_{j_l}$. Define
an equivalence relation $\sim_{\beta}$ on $\ZZ_2^m \times P$ by
$$(s,p) \sim_{\beta} (t,q)~ \mbox{if and only if}~ p=q~ \mbox{and}~ t-s \in H_F$$
where $F \subset P$ is the unique face whose relative interior contains $p$. So the
quotient space $ N(P, \beta) = (\ZZ_2^m \times P)/ \sim_{\beta} $ is an $n$-dimensional
smooth manifold. $N(P, \beta)$ is a $\ZZ_2^m$-space with the orbit map
$$\pi_u : N(P, \beta) \to P ~\mbox{defined by}~ \pi_u ([s,p]^{\sim_{\beta}}) = p.$$

We show $P$ has a smooth orbifold structure. Recall the open subset $U_v$ of $P$ associated to each vertex $v \in V(P)$.
Note that open sets $\{U_v : v\in V(P)\}$ cover $P$.
Let $d$ be the Euclidean distance in $\RR^n$. Let $F_{i_1}, F_{i_2}, \ldots, F_{i_n}$ be the facets
of $P$ such that $ v$ is the intersection of $ F_{i_1}, F_{i_2}, \ldots, F_{i_n}$. For each $p \in U_v$, let
$$x_j(p) = d(p, F_{i_j}), ~ \mbox{for all} ~ j = 1, 2, \ldots, n.$$
Let $B_v^n = \{(x_1(p), \ldots, x_n(p)) \in \RR^n_{\geq 0} : p \in U_v \}$. So the map
$$f : U_v \to B_v^n~ \mbox{defined by}~ p \to (x_1(p), \ldots, x_n(p))$$
gives a diffeomorphism from $U_v$ to $B_v^n$. Consider the standard action of $\ZZ_2^n$ on $\RR^n$ with
the orbit map $$\xi : \RR^n \to \RR^n_{\geq 0}.$$ Then $\xi^{-1}(B_v^n)$ is diffeomorphic to $B^n$.
Hence $(\xi^{-1}(B_v^n), \xi \circ f^{-1}, \ZZ_2^n)$ is a smooth orbifold chart on $U_v$.
To show the compatibility of these charts as $v$ varies over $V(P)$, we can introduce some additional
smooth orbifold charts to make this collection an smooth orbifold atlas as in section \ref{def}.
From the definition of $\sim$ it is clear that $\pi : X(P, \vartheta) \to P$ is a smooth orbifold covering.
\begin{defn}
Let $L$ be the simplicial complex dual to $P$. The right-angled Coxeter group $\Gamma$ associated to $P$ is the
group with one generator for each element of $V(L)$ and relations between generators are the following;
$a^2=1$ for all $a \in V(L)$, $(ab)^2 = 1$ if $\{a, b\} \in E(L)$.
\end{defn}
For each $p \in P$, let $F(p) \subset P$ be the unique face containing $p$ in its relative interior.
Let $F(p) = F_{j_1} \cap \ldots \cap F_{j_l}$. Let $a_{j_1}, \ldots, a_{j_l}$ be the vertices of $L$
dual to $ F_{j_1}, \ldots, F_{j_l}$ respectively. Let $\Gamma_{F(p)}$ be the subgroup
generated by $a_{j_1}, \ldots, a_{j_l}$ of $\Gamma$. Define an equivalence relation $\sim$ on
$\Gamma \times P$ by $$(g, p) \sim (h, q)~\mbox{if}~ p=q ~\mbox{and}~ h^{-1}g \in \Gamma_{F(p)}.$$
Let $Y= (\Gamma \times P)/\sim$ be the quotient space. We follow this construction from \cite{DJ}.
So $Y$ is a $\Gamma$-space with the orbit map
\begin{equation}
\xi_{\Gamma} : Y \to P ~ \mbox{defined by} ~ \xi_{\Gamma}([g,p]^{\sim}) = p.
\end{equation}
Then $Y$ is an $n$-dimensional manifold and $\xi_{\Gamma}$ is an orbifold
covering. Since each facet is connected, whenever two generators of $\Gamma$ commute
the intersection of corresponding facets of $P$ is nonempty. From Theorems 10.1 and 13.5 of \cite{D},
we get that $Y$ is simply connected. Hence $\xi_{\Gamma}$ is a universal orbifold covering and the
orbifold fundamental group of $P$ is $\Gamma$.

Let $H$ be the kernel of abelianization map $\Gamma \to \Gamma^{ab}$. The group $H$ acts on $Y$ freely and
properly discontinuously. So the orbit space $Y/H$ is a manifold. The space $Y/H$ is called the universal
abelian cover of $P$. Note that $N(P, \beta) = Y/H$. Let
\begin{equation}
 \xi_{\beta} : Y \to N(P, \beta)
\end{equation}
be the corresponding orbit map.
 
Define a function $\bar{\vartheta} : \ZZ_2^m \to \ZZ_2^{n-1}$ by
$\bar{\vartheta}(\beta_j) = \vartheta (F_j) = \vartheta_j$ on the basis of $\ZZ_2^m$.
So $\bar{\vartheta}$ is a linear surjection. $\bar{\vartheta}$ induces a surjection
\begin{equation}
\widetilde{\vartheta} : N(P, \beta) \to X(P, \vartheta)~\mbox{defined by}~
\widetilde{\vartheta}([s,p]^{\sim_{\beta}})= [s,p]^{\sim}.
\end{equation}
That is the following diagram commutes.
\begin{equation}
\begin{CD}
N(P, \beta) @>\widetilde{\vartheta}>> X(P, \vartheta) \\
@V\hat \pi_u VV  @V\hat \pi VV \\
 P @>id >> P
\end{CD}
\end{equation}
From this commutative diagram we get $\widetilde{\vartheta}$ is a smooth orbifold covering of $X(P, \vartheta)$.
Hence the composition map $$\widetilde{\vartheta} \circ \xi_{\beta} : Y \to X(P, \vartheta)$$ is a smooth
universal orbifold covering. From \cite{Th} and Theorem \ref{uniorb} we obtain the following necessary
condition for existence of an s-characteristic function.
\begin{theorem}\label{isos}
Let $\vartheta : \mathcal{F}(P) \to \mathbb{Z}_{2}^{n-1}$ be an s-characteristic function of the
$n$-polytope $P$ ($n > 2$). Then the space $Y$ is diffeomorphic to $\RR^n$. 
\end{theorem}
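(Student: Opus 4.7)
The plan is to recognize $Y$ as \emph{the} universal orbifold cover of the small orbifold $\mathcal{X}(P,\vartheta)$, and then apply Theorem \ref{uniorb} together with the uniqueness (up to diffeomorphism) of universal orbifold covers. Most of the ingredients have already been assembled in the paragraphs preceding the statement; what remains is to package them correctly.

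First I would collect the chain of coverings already constructed: the Davis covering $\xi_\Gamma : Y \to P$, the quotient covering $\xi_\beta : Y \to N(P,\beta) = Y/H$, and the smooth orbifold covering $\widetilde{\vartheta} : N(P,\beta) \to \mathcal{X}(P,\vartheta)$ induced by the linear surjection $\bar{\vartheta} : \ZZ_2^m \to \ZZ_2^{n-1}$. Their composition $\widetilde{\vartheta}\circ\xi_\beta : Y \to \mathcal{X}(P,\vartheta)$ is a smooth orbifold covering. Because $Y$ is a manifold, its orbifold fundamental group coincides with its ordinary fundamental group, and by the Davis theorems cited in the text (Theorems 10.1 and 13.5 of \cite{D}) this group is trivial; the simple-polytope hypothesis, which forces every pair of commuting generators of $\Gamma$ to correspond to a pair of intersecting facets, is precisely what lets those theorems apply.

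Next I would invoke the standard fact for orbifolds (see \cite{Th}, and also Observation \ref{orbgp2}) that any simply connected orbifold cover of an orbifold $\mathcal{Z}$ is automatically \emph{the} universal orbifold cover of $\mathcal{Z}$. Applied to $\widetilde{\vartheta}\circ\xi_\beta$, this identifies $Y$ with the universal orbifold cover of $\mathcal{X}(P,\vartheta)$. By Theorem \ref{uniorb} the universal orbifold cover of $\mathcal{X}(P,\vartheta)$ is diffeomorphic to $\RR^n$, and uniqueness of universal orbifold covers up to diffeomorphism then gives $Y \cong \RR^n$.

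The main delicacy is the simple connectivity of $Y$. This is not a new calculation here — it is exactly what the invocation of the Davis theorems supplies — but in writing it carefully I would want to verify that the Coxeter-group discussion preceding the statement, combined with the fact that facets of a simple polytope are connected and every pair of commuting Coxeter generators arises from intersecting facets, genuinely fits the hypotheses of Theorems 10.1 and 13.5 of \cite{D}. Once this is in hand, the identification $Y \cong \RR^n$ is a formal consequence of the uniqueness of universal orbifold covers and Theorem \ref{uniorb}, with no further geometric input required.
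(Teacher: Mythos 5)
Your proposal is correct and follows essentially the same route as the paper: establish that $Y$ is simply connected via the Davis theorems, observe that $\widetilde{\vartheta}\circ\xi_\beta$ is therefore a universal orbifold covering of $\mathcal{X}(P,\vartheta)$, and conclude by Theorem \ref{uniorb} together with uniqueness of universal orbifold covers. Your added remark about why the Davis hypotheses are satisfied (connected facets, commuting generators arising only from intersecting facets) matches the sentence the paper devotes to this point just before the theorem.
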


Note that when $P$ is an $n$-simplex, $Y$ is homeomorphic to the $n$-dimensional sphere $S^n$.
So by this theorem there does not exist an s-characteristic function of $n$-simplex.

Let $ \xi_{\vartheta}$ be the following composition map
\begin{equation}
\begin{CD}
 \Gamma \to \Gamma^{ab} @>\bar{\vartheta} >> \ZZ_2^{n-1}.
\end{CD}
\end{equation}
Clearly $ker(\xi_{\vartheta})$, kernel of $ \xi_{\vartheta}$, acts on $Y$ with the orbit map
$\widetilde{\vartheta} \circ \xi_{\beta}$. Now using the observation \ref{orbgp2}, we get the following corollary.
\begin{corollary}
The orbifold fundamental group of $X(P, \vartheta)$ is $ker(\xi_{\vartheta})$ which is
a normal subgroup of the right-angled Coxeter group associated to the polytope $P$.
\end{corollary}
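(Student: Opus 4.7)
The plan is to combine the two universal-cover results already assembled in the excerpt and then to identify the deck group explicitly. First, Theorem \ref{uniorb} says that the universal orbifold cover of $\mathcal{X}(P,\vartheta)$ is diffeomorphic to $\RR^n$, while Theorem \ref{isos} says that $Y=(\Gamma\times P)/\!\sim$ is also diffeomorphic to $\RR^n$. The excerpt has already exhibited a smooth orbifold covering $\widetilde{\vartheta}\circ\xi_\beta:Y\to X(P,\vartheta)$; by uniqueness of the universal orbifold cover, this map therefore \emph{is} the universal orbifold covering.

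Next I would invoke Observation \ref{orbgp2}: the orbifold fundamental group is the deck group $G$ acting on $Y$ whose quotient is $X(P,\vartheta)$ and whose restriction to the smooth locus is an honest regular cover. The natural candidate is $\ker(\xi_\vartheta)\subset\Gamma$. Since $\bar\vartheta:\Gamma^{ab}\to\ZZ_2^{n-1}$ is surjective (the vectors $\vartheta_i$ span $\mathbb{F}_2^{n-1}$ at every vertex), we have $\Gamma/\ker(\xi_\vartheta)\cong\ZZ_2^{n-1}$. The key verification is that $Y/\ker(\xi_\vartheta)\cong X(P,\vartheta)$: tracing the definitions, two points $[g,p]^\sim$ and $[h,q]^\sim$ in $Y$ lie in the same $\ker(\xi_\vartheta)$-orbit if and only if $p=q$ and $\xi_\vartheta(h^{-1}g)\in\xi_\vartheta(\Gamma_{F(p)})=G_{F(p)}$, which is precisely the relation defining $X(P,\vartheta)$. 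Equivalently, $\widetilde{\vartheta}\circ\xi_\beta$ factors through the quotient $Y\to Y/\ker(\xi_\vartheta)$ as a homeomorphism onto $X(P,\vartheta)$. Combined with Observation \ref{orbgp2}, this yields $\pi_1^{\rm orb}(\mathcal{X})=\ker(\xi_\vartheta)$, and normality is automatic because $\ker(\xi_\vartheta)$ is the kernel of a group homomorphism out of $\Gamma$.

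The main obstacle is the equality $\xi_\vartheta(\Gamma_{F(p)})=G_{F(p)}$ together with the verification that no extra identifications are created or lost when passing to the quotient. This amounts to a face-by-face unwinding of the commutative square between $N(P,\beta)$ and $X(P,\vartheta)$ already recorded above: on each face $F$ of $P$, the isotropy subgroup $\Gamma_F$ is generated by the Coxeter generators dual to the facets containing $F$, and these map under $\xi_\vartheta$ exactly to the vectors $\vartheta_{j_i}$ generating $G_F$. Once this bookkeeping is in place, the rest of the argument is formal.
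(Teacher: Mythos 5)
Your proposal is correct and takes essentially the same route as the paper: exhibit $\widetilde{\vartheta}\circ\xi_\beta : Y \to X(P,\vartheta)$ as the universal orbifold cover, identify $\ker(\xi_\vartheta)$ as the group acting on $Y$ with this orbit map, and invoke Observation \ref{orbgp2}. The only minor difference is that the paper obtains universality of $Y\to X(P,\vartheta)$ directly from the fact (via Davis) that $Y$ is simply connected, whereas you deduce it from Theorems \ref{uniorb} and \ref{isos} plus uniqueness of the universal cover — logically equivalent given those theorems — and you spell out the verification $\xi_\vartheta(\Gamma_{F(p)})=G_{F(p)}$ and the orbit-matching computation that the paper compresses into the single word ``Clearly.''
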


\section{Homology and Euler characteristic}\label{hom}
\subsection{Face vector of polytope}
The face vector or $f$-$vector$ is an important concept in the combinatorics of polytopes.
Let $L$ be a simplicial $n$-polytope and $ f_j $ is the number of $j$-dimensional faces of $L$.
The integer vector $f(L) = (f_0, \ldots, f_{n-1})$ is called the $f$-$vector$ of the simplicial polytope $L$.
Let $h_i$ be the coefficients of $ t^{n-i}$ in the polynomial
\begin{equation}
 (t-1)^n + \Sigma_{0}^{n-1} f_i (t-1)^{n-1-i}.
\end{equation}
The vector $h(L) = (h_0, \ldots, h_n)$
is called $h$-$vector$ of $ L$. Obviously $h_0 = 1$, and $\Sigma_0^n h_i = f_{n-1}$.
The $f$-$vector$ and $h$-$vector$ of a simple $n$-polytope $P$ is the
$f$-$vector$ and $h$-$vector$ of its dual simplicial polytope respectively,
 that is $ f(P) = f(P^{\ast})$ and  $ h(P) = h(P^{\ast})$.

Hence for a simple $n$-polytope $P$,
\begin{equation}
 f(P) = (f_0, \ldots, f_{n-1}),
\end{equation}
where $f_j$ is the number of codimension-$(j+1)$ faces of $P$.
Then $h_n = 1$ and $ \Sigma_{_{1}}^{^{n}} h_i $ is the number of vertices of $P$.
The face vectors are a combinatorial invariant of polytopes, that is it depends only on the face poset
of the polytope.

\subsection{$CW$-complex structure }\label{cw}
To calculate the singular homology groups of small orbifold $X$ we construct a $CW$-structure on these
orbifolds and describe how the cells are attached.
Realize $P$ as a convex polytope in $\RR^n$ and choose a linear functional
\begin{equation}\label{indfun}
 \phi: \RR^n \to \RR
\end{equation}
which distinguishes the vertices of $P$, as in proof of Theorem $3.1$ in \cite{DJ}.
The vertices are linearly ordered according to ascending value of $\phi$.
We make the $1$-skeleton of $P$ into a directed graph by orienting each
edge such that $\phi$ increases along edges.
For each vertex of $ P $ define its index $ ind_{P}(v) $,
as the number of incident edges that point towards $ v $.
\begin{defn}
A subset $Q \subseteq P$ of dimension $k$ is called a proper subcomplex of $P$ if $Q$ is
connected and $Q$ is the union of some $k$-dimensional faces of $P$.
\end{defn}

In particular each face of $P$ is a proper subcomplex of $P$.
The $1$-skeleton of $Q$ is a subcomplex of the $1$-skeleton of $P$.
The restriction of $ \phi $ on the $1$-skeleton of $ Q $ makes it a directed graph.
Define index $ ind_{Q}(v) $ of each vertex $v$ of $Q$ as the number of incident
edges in $Q$ that point towards $ v $. Let $V(Q)$ and $\mathfrak{F}(Q)$ denote the
set of vertices and the set of faces of $Q$ respectively.
We construct a $CW$-structure on $X$ in the following way. Let 
$$ I_{P} = \{(u,e_u) \in V(P)\times E(P) : ind_P(u) = n~ \mbox{and}~ e_u ~\mbox{is the edge joining the vertices}$$
$$ u, x_u ~\mbox{such that} ~ \phi(u) > \phi(x_u) > \phi(a)~\mbox{for all vertex} ~ a \in V(P)-\{u,x_u\} \}.$$

Let $ U_{e_u} = U_{u} \cap U_{x_u} $ and $ Q^n = P$.
Then $ W_{e_u} = (\mathbb{Z}_{2}^{n-1} \times U_{e_u})/ \sim $ is homeomorphic to the $n$-dimensional
open ball $ B^n \subset \RR^n $. Let
\begin{equation}
  Q^{n-1} = P - U_{e_u}.
\end{equation}
$Q^{n-1}$ is the union of facets not containing the edge $ e_u $ of $P$. So $ Q^{n-1} $ is an
$(n-1)$-dimensional proper subcomplex of $P$ and $ V(P) = V(Q^{n-1})$.
Let $v \in V(Q^{n-1})$ with $ ind_{Q^{n-1}}(v) = n-1 $. Let $F_v^{n-1} \in \mathfrak{F}(Q^{n-1})$
be the smallest face which contains the inward pointing edges incident to $v$ in $Q^{n-1}$. If
$v_1, v_2$ are two vertices of $Q^{n-1}$ with $ ind_{Q^{n-1}}(v_1) = n-1 = ind_{Q^{n-1}}(v_2)$
then $F_{v_1}^{n-1} \neq F_{v_2}^{n-1}$. Let 
$$ I_{Q^{n-1}} = \{(v,e_v) \in V(P) \times E(P) : ind_{Q^{n-1}}(v) = n-1 ~  \mbox{and}~e_v ~
 \mbox{is the edge joining the}$$
$$\mbox{vertices}~ v, y_v \in V(F_v^{n-1})~\mbox{such that}~ \phi(v) > \phi(y_v) > \phi(b)~ \forall ~
b \in V(F_v^{n-1})-\{v, y_v\}\}.$$
Let $$ U_{e_{v}} = U_{v} \cap U_{y_v} \cap F_v^{n-1} ~ \mbox{for each}~ (v, e_v) \in I_{Q^{n-1}}.$$
From observation \ref{obssub}, $ W_{e_v} = (\mathbb{Z}_{2}^{n-1} \times U_{e_v})/ \sim $ is homeomorphic
to the $(n-1)$-dimensional open ball $ B^{n-1} \subset \RR^{n-1} $. Let
\begin{equation}
Q^{n-2} = P - \{\{ \bigcup_{(u, e_u) \in I_{Q^n}} U_{e_{u}}\} \cup \{\bigcup_{(v, e_v) \in I_{Q^{n-1}}} U_{e_{v}}\}\}.
\end{equation}
So $ Q^{n-2} $ is an $(n-2)$-dimensional proper subcomplex of $P$ and $ V(P) = V(Q^{n-2})$.
Let $w \in V(Q^{n-2})$ with $ ind_{Q^{n-2}}(w) = n-1 $. Let $F_w^{n-2} \in \mathfrak{F}(Q^{n-2})$
be the smallest face which contains the inward pointing edges incident to $w$ in $Q^{n-2}$. If
$w_1, w_2$ are two vertices of $Q^{n-2}$ with $ ind_{Q^{n-2}}(w_1) = n-1 = ind_{Q^{n-2}}(w_2)$
then $F_{w_1}^{n-2} \neq F_{w_2}^{n-2}$. Let
$$ I_{Q^{n-2}} = \{(w,e_w) \in V(P) \times E(P) : ind_{Q^{n-2}}(w) = n-2 ~ \mbox{and}~e_w~\mbox{is the edge joining the}$$
$$\mbox{vertices} ~w, z_w \in V(F_w^{n-2}) ~ \mbox{such that}~\phi(w) > \phi(z_w) > \phi(c)~
\forall~ c \in V(F_w^{n-2})- \{w,z_w\} \}.$$
Let $$ U_{e_{w}} = U_{w} \cap U_{z_w} \cap F_w^{n-2} ~ \mbox{for each}~ (w, e_w) \in I_{Q^{n-2}}.$$
From observation \ref{obssub}, $ W_{e_w} = (\mathbb{Z}_{2}^{n-1} \times U_{e_w})/ \sim $ is homeomorphic
to the $(n-2)$-dimensional open ball $ B^{n-2} \subset \RR^{n-2} $.

Continuing this process we observe that $ Q^1 (\cong (\mathbb{Z}_{2}^{n-1} \times Q^1)/ \sim ) $
is a maximal tree of the $1$-skeleton of $P$ and $ Q^0 = V(P)$.
Hence relative interior of each edge of $(\mathbb{Z}_{2}^{n-1} \times Q^1)/ \sim $ is homeomorphic
to the $1$-dimensional ball in $\RR$.
So corresponding to each edge of polytope $P$, we construct a cell of dimension $\geq 1 $ of $X$.

The integer $h_{n-i}$ is the number of vertices $v \in V(P)$ of $ind_P(v) =i$. The Dehn-Sommervile relation is
$$h_i=h_{n-i}~ \forall~ i=0,1,\ldots, n,$$
see Theorem $1.20$ of \cite{BP}. Hence the number of $k$-dimensional cells in $X$ is
\begin{equation}
|I_{Q^{k}}| = \Sigma_{k}^{n} h_i.
\end{equation}

We describe the attaching map for a $k$-dimensional cell. Here $k$-dimensional cells are
$$\{ W_{e_v} : (v,e_v) \in I_{Q^k} \}.$$
Let $(v, e_v) \in I_{Q^k}$. Let $ F_v^k \in \mathfrak{F}(Q^k) $ be the smallest face containing the inward
pointing edges to $v$ in $Q^k$.
Define an equivalence relation $\sim_{e_v}$ on $\ZZ_2^{n-1} \times F_v^k$ by
\begin{equation}\label{equiev}
(t,p) \sim_{e_v} (s,q) ~\mbox{if}~ p=q \in F^{\prime} ~\mbox{and} ~ s-t \in G_{F^{\prime}} 
\end{equation}
where $ F^{\prime} \in \mathfrak{F}(F_v^k)$ is a face containing the edge $e_v$.
The quotient space $ (\mathbb{Z}_{2}^{n-1} \times F_v^k)/\sim_{e_v} $ is homeomorphic to the closure
of open ball $B^k \subset \RR^k$. The attaching map $\phi_{F_v^k}$ is the natural quotient map
\begin{equation}\label{atcmap}
\phi_{F_v^k} : S^{k-1} \cong (\mathbb{Z}_{2}^{n-1}\times (F_v^k- U_{e_v})/\sim_{e_v} ~ \to
 (\mathbb{Z}_{2}^{n-1} \times (F_v^k- U_{e_v})/\sim.
\end{equation}
Let $ X_k = {\displaystyle \bigcup_{i=1}^{k} ~ \bigcup_{(v, e_v) \in I_{Q^i}}{\bar{W}_{e_v}}}$.
Then $X_k$ is the $k$-th skeleton of $X$ and
$$ X = \displaystyle \bigcup_{k=1}^{n} X_k.$$
So we get a $CW$-complex structure on $ X $ with $ \Sigma_{k}^{n} h_i $ cells in dimension $k$, $ 0\leq k \leq n $.
Since singular homology and cellular homology are isomorphic, we compute cellular homology of $X$.
To calculate cellular homology we compute the boundary map of the cellular chain complex for $X$.
To compute the boundary map we need to compute the degree of the following composition map $\beta_{we_v}$
\begin{equation}\label{bddmap}
\begin{CD}
S^{k-1} @>\phi_{F_v^k}>> (\ZZ_{2}^{n-1}\times (F_v^k- U_{e_v})/\sim @>q>> \frac{X_{k-1}}{X_{k-2}}
= \displaystyle \bigvee_{(w, e_w) \in I_{Q^{k-1}}} S^{k-1}_w @>q_{w}>> S^{k-1}_w
\end{CD}
\end{equation}
where $F_v^k$ is a face of $Q^k$ of dimension $k$ ($k \geq 2$), $S_w^{k-1} \cong S^{k-1}$ and $ q, q_w $ are
the quotient maps. Clearly the above composition map $\beta_{we_v}$ is either surjection or constant up to homotopy.
When the map is constant the degree of the composition map $\beta_{we_v}$ is zero.
We calculate the degree of the composition when it is a surjection.

Let $(w, e_w) \in I_{Q^{k-1}}$ be such that $\beta_{we_v}$ is a surjection.
Let $ z_w$ be the vertex of the edge $e_w$ other than $w$.
Let $ F_w^{k-1} \in \mathfrak{F}(Q^{k-1})$ be the smallest face which contains the inward
pointing edges to $w$ in $Q^{k-1}$. Let $$U_{e_w} = U_w \cap U_{z_w} \cap F_w^{k-1}.$$
So $ U_{e_w}$ is an open subset of $F_w^{k-1} $ and $U_{e_w}$ contains the relative interior
of $e_w$. The face $F_w^{k-1} \subset F_v^k -U_{e_v}$ is a facet of $F_v^k$. Note that
$$ W_{e_w} = (\ZZ_2^{n-1} \times U_{e_w})/ \sim ~= S^{k-1}_w - \{X_{k-2}/X_{k-2}\}.$$

The quotient group $G_{F_w^{k-1}}/G_{F_v^k}$ is isomorphic to $\ZZ_2$.
Hence from equations \ref{equiev}, \ref{atcmap} and \ref{bddmap} we get that
$ (\beta_{we_v})^{-1}(W_{e_w}) $ has two components $ Y^1 $ and $ Y^2 $ in $S^{k-1}$.
The restrictions $ (\beta_{we_v})_{|Y^1} $ and $ (\beta_{we_v})_{|Y^1} $, on $ Y^1 $ and $ Y^2 $
respectively, give homeomorphism to $ W_{e_w} $. Let $ y_v$ be the vertex of the edge
$e_v$ other than $v$. Observe that
$$(\mathbb{Z}_{2}^{n-1}\times (F_v^k- \{U_{e_v} \cup \{v, y_v\}\})/\sim_{e_v} \cong I^0 \times S^{k-2}.$$

Hence from the definition of equivalence relation $\sim$, 
it is clear that $ Y^2 $ is the image of $ Y^1 $ under the map (possibly up to homotopy)
\begin{equation}
(id \times \mathfrak{a}) : I^0 \times S^{k-2} \to I^0 \times S^{k-2} ~\mbox{defined by}~
(id \times \mathfrak{a}) (r, x) = (r, -x),
\end{equation}
where $I^0$ is the open interval $(0,1) \subset \RR$. The degree of $(id \times \mathfrak{a})$ is $ (-1)^{k-1} $.
Hence the degree of the composition map $\beta_{we_v}$ is
\begin{equation}
d_{vw} = deg(\beta_{we_v}) = \left\{ \begin{array}{ll} 2 & \mbox{if} ~ k\geq 2 ~ \mbox{is odd and}
~ \beta_{a_v} ~ \mbox{is a surjection}\\
 0 & \mbox{if} ~ k\geq 2 ~ \mbox{is even and} ~ \beta_{a_v} ~ \mbox{is a surjection}\\
 0 & \mbox{if} ~ \beta_{a_v} ~ \mbox{is constant}
\end{array} \right.
\end{equation}

Hence the cellular chain complex of the constructed $CW$-complex of small orbifold $X$ is 
\begin{equation}\label{ccc}
\begin{CD}
0 \to \ZZ @>d_n>> \oplus_{|I_{Q^{n-1}}|} ~\ZZ \to \cdots @>d_3>> \oplus_{|I_{Q^2}|}~\ZZ
@>d_2>> \oplus_{|I_{Q^1}|}~\ZZ @>d_1>> \oplus_{|I_{Q^0}|}~\ZZ \to 0\\
\end{CD}
\end{equation}
where $ d_k $ is the boundary map of the cellular chain complex. 
If $ k\geq 2 $ the formula of $d_k$ is
\begin{equation}
 d_k(W_{e_v}) =\displaystyle \sum_{(w, e_w)\in I_{Q^{k-1}}} d_{vw} W_{e_w},
\end{equation}
 where $(v, e_v) \in I_{Q^k}$ and $d_{vw}$ is the degree of the composition map $ \beta_{we_v} $.
Hence the map $ d_{k} $ is represented by the following matrix
with entries
\begin{equation}
 \{d_{vw} : (v, e_v)\in I_{Q^k}, (w,e_w)\in I_{Q^{k-1}} \}.
\end{equation}
 
So the map $ d_{k} $ is the zero matrix for all even $ k $.
When $ k \geq 2 $ is odd, the map $ d_{k} $ is injective and the image of the map $ d_{k} $
is the submodule generated by
\begin{equation}
\{\displaystyle \sum_{(w, e_w)\in I_{Q^{k-1}}} d_{vw} W_{e_w} : (v, e_v)\in I_{Q^k}\}.
\end{equation}
Hence the quotient module $(\oplus_{|I_{Q^{k-1}}|}~ \ZZ)/ Im d_k$ is isomorphic to 
$(\oplus_{h_k}~ \ZZ) \oplus (\oplus_{\Sigma_{k+1}^{n} h_i}~ \ZZ_2)$.

Since the $1$- skeleton $ X_1 $ is a tree with $ \Sigma_0^{n} h_i $ vertices and
$\Sigma_{1}^{n} h_i $ edges. The boundary map $ d_1 $ is an injection. The image of $ d_1 $ is
$ \Sigma_{1}^{n} h_i $ dimensional direct summand of $ \oplus_{|I_{Q^0}|} ~\ZZ $ over $ \ZZ $.
Hence $ (\oplus_{|I_{Q^0}|} ~\ZZ)/ d_1( \oplus_{|I_{Q^1}|} ~\ZZ ) $ is isomorphic to $ \ZZ $.
From the previous calculation we have proved the following theorem.
\begin{theorem}
The singular homology groups of the small orbifold $ X $ with coefficients in $\ZZ$ is
$$H_k(X, \ZZ) = \left\{ \begin{array}{ll} \ZZ & \mbox{if} ~ k = 0 ~ \mbox{and if} ~ k = n ~ \mbox{even}\\
(\oplus_{h_k} ~\ZZ) \oplus (\oplus_{\Sigma_{k+1}^{n} h_i} ~\ZZ_2) & \mbox{if} ~ k ~ \mbox{is even},~ 0 <  k  <  n\\
 0 & \mbox{otherwise}
\end{array} \right.
$$
\end{theorem}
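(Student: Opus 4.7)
The proof will be a direct computation of the cellular homology of the $CW$-complex constructed in subsection \ref{cw}. All the essential ingredients are already in hand: the number of $k$-cells is $|I_{Q^{k}}| = \Sigma_{k}^{n} h_i$; the boundary $d_k$ vanishes for every even $k$; the boundary $d_k$ is injective for every odd $k \geq 1$; and for $k \geq 3$ odd the matrix of $d_k$ has entries in $\{0, \pm 2\}$, each column being of the form $\sum_{w} d_{vw} W_{e_w}$ with $d_{vw} \in \{0, 2\}$. The only missing piece is to extract the isomorphism type of the kernels and cokernels from these facts.

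I would first dispose of the easy cases. Connectedness of $X$ (or equivalently the fact that $X_1$ is a tree, so $d_1$ is injective of rank $\Sigma_{1}^{n} h_i$) gives $H_0(X,\ZZ) = \ZZ$. For any odd $k\geq 1$, the injectivity of $d_k$ yields $\ker d_k = 0$, so $H_k(X,\ZZ) = 0$ regardless of $d_{k+1}$. For $k = n$, when $n$ is odd the injectivity of $d_n$ again gives $H_n(X,\ZZ)=0$; when $n$ is even, $d_n = 0$ and $C_n \cong \ZZ$ (since $|I_{Q^n}| = h_n = 1$), so $H_n(X,\ZZ) = \ZZ$.

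The substantive case is even $k$ with $0 < k < n$. Here $\ker d_k = C_k \cong \ZZ^{\Sigma_{k}^{n} h_i}$ and I must identify $C_k/\mathrm{Im}\, d_{k+1}$. The plan is to put the matrix $M$ of $d_{k+1}$ (of size $\Sigma_{k}^{n} h_i \times \Sigma_{k+1}^{n} h_i$) into Smith normal form. The key structural input is the canonical pairing between a $(k+1)$-cell $W_{e_v}$, indexed by $(v,e_v) \in I_{Q^{k+1}}$, and the $k$-cell $W_{e_v}$ obtained by viewing the same edge inside the smaller subcomplex $Q^k$; ordering the columns of $M$ by the linear functional $\phi$ and placing the paired rows first, this pairing produces a block lower-triangular matrix whose diagonal entries are all $\pm 2$. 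Thus $M$ has rank $\Sigma_{k+1}^{n} h_i$ with all invariant factors equal to $2$, and the remaining $h_k$ rows (corresponding to vertices of index $k$, which are not paired to any higher cell) contribute the free part. This yields $H_k(X,\ZZ) \cong \ZZ^{h_k} \oplus \ZZ_2^{\Sigma_{k+1}^{n} h_i}$.

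The main obstacle is the last step: establishing the block-triangular structure of $M$ rigorously. What must be checked is that for each $(v,e_v)\in I_{Q^{k+1}}$, the cell $W_{e_v} \subset Q^k$ appears in the boundary $d_{k+1}(W_{e_v}^{(k+1)})$ with coefficient $\pm 2$, and that this pairing is in fact a bijection of $I_{Q^{k+1}}$ with a subset of $I_{Q^k}$ whose complement has cardinality $h_k$. This amounts to a careful bookkeeping of how the inductive definition of $Q^{k}, Q^{k-1}, \ldots$ removes open neighborhoods of edges, together with the Dehn--Sommerville identity $h_i = h_{n-i}$ to account for the $h_k$ leftover vertices. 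Once this combinatorial check is in place, the formula for $H_k(X,\ZZ)$ follows immediately from the Smith normal form computation.
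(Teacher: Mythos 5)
Your proposal follows essentially the same route as the paper: both read off the homology from the cellular chain complex built in subsection \ref{cw}, using that $d_k$ vanishes for even $k$ and is injective with all entries in $\{0,\pm 2\}$ for odd $k\geq 2$, and then identifying the cokernel; in fact the paper merely asserts the cokernel is $(\oplus_{h_k}\ZZ)\oplus(\oplus_{\Sigma_{k+1}^{n}h_i}\ZZ_2)$, and your Smith-normal-form argument via a vertex-indexed triangular block is a sound way to justify that assertion (the $\pm 1$ full-rank minor of $M/2$ forces every invariant factor of $M$ to be exactly $2$, and it simultaneously establishes the injectivity that the paper also leaves unproved). One small correction: the pairing from $I_{Q^{k+1}}$ into $I_{Q^k}$ matches \emph{vertices}, not edges. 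If $(v,e_v)\in I_{Q^{k+1}}$ then $U_{e_v}$ is precisely what gets deleted in passing to $Q^k$, so the relative interior of $e_v$ is no longer in $Q^k$; since $\mathrm{ind}_{Q^k}(v)=\min(\mathrm{ind}_P(v),k)$, the vertex $v$ reappears in $I_{Q^k}$ but with a different distinguished edge $e_v'$, and it is the coefficient of $W_{e_v'}$ in $d_{k+1}(W_{e_v})$ that you must check is $\pm 2$. With that fix, and ordering by the Morse functional $\phi$ so that $d_{vw}\neq 0$ forces $\phi(w)\leq\phi(v)$ (because $w\in F_v^{k+1}$ and $v$ is the top vertex of that face), the triangular structure holds as you describe.
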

\begin{remark}
If $P$ is an even dimensional simple polytope then the small orbifold over $P$ is orientable.
\end{remark}

\begin{corollary}\label{ho}
The singular homology groups of the orbifold $ X $ with coefficients in $\QQ$ is 
$$H_k(X, \QQ) = \left\{ \begin{array}{ll} \QQ & \mbox{if} ~ k = 0 ~ \mbox{and if} ~ k = n ~ \mbox{even}\\
\oplus_{h_k}~\QQ & \mbox{if} ~ k ~ \mbox{is even},~ 0 < k < n\\
 0 & \mbox{otherwise}
\end{array} \right.
$$
\end{corollary}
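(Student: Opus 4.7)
My plan is to deduce this corollary directly from the preceding integral computation by applying the universal coefficient theorem. Since $\QQ$ is a flat $\ZZ$-module, one has
\begin{equation*}
H_k(X,\QQ) \cong H_k(X,\ZZ) \otimes_{\ZZ} \QQ,
\end{equation*}
with no $\mathrm{Tor}$ contribution. Using the integral formula from the previous theorem, the free summands $\oplus_{h_k}\ZZ$ become $\oplus_{h_k}\QQ$, while the torsion summands $\oplus_{\Sigma_{k+1}^{n} h_i}\ZZ_2$ satisfy $\ZZ_2 \otimes_\ZZ \QQ = 0$ and hence disappear, yielding exactly the stated formula.

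Alternatively, and as a sanity check, I would redo the cellular homology computation with rational coefficients. The cellular chain complex \eqref{ccc} is the same, just tensored with $\QQ$. The boundary maps $d_k$ for even $k$ remain zero, so $\ker d_k = \oplus_{|I_{Q^k}|}\QQ$. For odd $k\geq 3$ the map $d_k$ has entries in $\{0,2\}$ and was shown to be injective over $\ZZ$; injectivity is preserved under the flat extension $\ZZ\hookrightarrow\QQ$, so $\ker d_k = 0$. Likewise $d_1$ is injective with image of rank $|I_{Q^1}|$.

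Putting these together gives $H_k(X,\QQ)=0$ for odd $k$; for even $k$ with $0<k<n$ we get $H_k(X,\QQ) \cong \QQ^{|I_{Q^k}|-|I_{Q^{k+1}}|}$, and since $|I_{Q^j}|=\sum_{i=j}^n h_i$ this difference is exactly $h_k$. For $k=0$ the analogous count gives $\sum_0^n h_i - \sum_1^n h_i = h_0 = 1$, and when $n$ is even $d_n=0$ gives $H_n(X,\QQ)\cong\QQ$, while for $n$ odd injectivity of $d_n$ gives $H_n(X,\QQ)=0$.

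There is no real obstacle here: the work was done in the integral theorem, and the only content of the corollary is that rationalization kills the $\ZZ_2$-torsion. The only small bookkeeping point is checking that the injectivity of the odd-degree boundary maps persists over $\QQ$, which is immediate because those maps were shown to be injective with free cokernel-plus-torsion quotient already over $\ZZ$.
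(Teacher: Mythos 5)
Your proof is correct, and the first argument (tensoring the integral answer with the flat module $\QQ$, with the $\text{Tor}$ term vanishing) is precisely the implicit content of the paper, which states the corollary without further proof as an immediate consequence of the preceding integral theorem. Your alternative re-derivation via the cellular chain complex over $\QQ$ is also sound --- flatness of $\QQ$ preserves the injectivity of the odd-degree $d_k$, and the rank count $|I_{Q^k}|-|I_{Q^{k+1}}|=h_k$ is correct --- but it simply reproduces the paper's cellular computation with rational coefficients, so it is the same approach rather than a genuinely different one.
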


With coefficients in $\ZZ_2$ the cellular chain complex \ref{ccc} is
\begin{equation}
\begin{CD}
0 \to \ZZ_2 @>0>> \oplus_{|I_{Q^{n-1}}|} ~\ZZ_2 @>0>> \cdots @>0>> \oplus_{|I_{Q^1}|}~\ZZ_2
@>d_1>> \oplus_{|I_{Q^0}|}~\ZZ_2 @>0>> 0 \\
\end{CD}
\end{equation}
Where $d_1$ is an injection. Hence $ (\oplus_{|I_{Q^0}|} ~\ZZ_2)/ d_1( \oplus_{|I_{Q^1}|}~\ZZ_2 ) $ is isomorphic
to $ \ZZ_2 $. So we get the following corollary.
\begin{corollary}
The singular homology groups of the orbifold $ X $ with coefficients in $\ZZ_2$ is
$$H_k(X, \ZZ_2) = \left\{ \begin{array}{ll} \ZZ_2 & \mbox{if} ~~ k = 0 ~ \mbox{and if} ~ k = n \\
\oplus_{\Sigma_{k}^{n} h_i}~\ZZ_2 & \mbox{if} ~ 1  < k < n\\
 0 & \mbox{if}~ k=1
\end{array} \right.
$$
\end{corollary}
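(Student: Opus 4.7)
The plan is to derive the $\ZZ_2$-homology directly from the cellular chain complex constructed in Section \ref{cw} by tensoring the integral chain complex with $\ZZ_2$. The crucial observation is that every entry $d_{vw}$ of the integral differential $d_k$ for $k \geq 2$ is either $0$ or $2$, so $d_k \otimes \ZZ_2 = 0$ for every $k \geq 2$. Hence the only potentially nontrivial boundary in the mod 2 chain complex is $d_1$, which is exactly the complex displayed just before the corollary.

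I would next verify that $d_1 \otimes \ZZ_2$ remains injective. The $1$-skeleton $X_1 = (\ZZ_2^{n-1}\times Q^1)/\sim$ is a tree with $\Sigma_{0}^{n} h_i$ vertices (the $0$-cells) and $\Sigma_{1}^{n} h_i$ edges (the $1$-cells), and over $\ZZ_2$ the boundary of each edge is the sum of its two endpoints. Any nonzero $\ZZ_2$-chain in $\ker(d_1)$ would correspond to a nonempty edge subset $S$ in which every vertex is incident to an even number of edges of $S$; but the subgraph spanned by $S$ in a tree is a nonempty forest and therefore contains a leaf vertex lying on exactly one edge of $S$, giving the required contradiction. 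Hence $d_1 \otimes \ZZ_2$ is injective.

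Given this, the cellular homology is immediate. The cokernel of $d_1 \otimes \ZZ_2$ has $\ZZ_2$-dimension $\Sigma_{0}^{n} h_i - \Sigma_{1}^{n} h_i = h_0 = 1$, yielding $H_0(X;\ZZ_2) = \ZZ_2$; the kernel is zero, giving $H_1(X;\ZZ_2) = 0$; and for $2 \leq k \leq n$, both adjacent boundary maps vanish, so $H_k(X;\ZZ_2)$ equals the full cellular group $\oplus_{\Sigma_{k}^{n} h_i}\ZZ_2$, which for $k = n$ collapses to $\ZZ_2$ because $h_n = 1$. There is no serious obstacle; the only small verification beyond the integral calculation is the injectivity of $d_1$ modulo $2$, handled by the leaf argument above. (Alternatively, one could invoke the universal coefficient theorem applied to the $\ZZ$-homology already computed, but the direct chain-level argument is both shorter and cleaner here.)
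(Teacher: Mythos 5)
Your argument is essentially the same as the paper's: reduce the integral cellular chain complex mod $2$, note that every $d_k$ with $k \geq 2$ has entries in $\{0,2\}$ and hence vanishes mod $2$, and then read off the homology from the resulting complex with $d_1 \otimes \ZZ_2$ as the only nonzero differential. The paper simply asserts that $d_1$ remains injective over $\ZZ_2$; your leaf argument on the tree $X_1$ is a welcome justification of that step but does not change the route.
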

\begin{remark}
The $k$-th modulo $2$ Betti number $b_k (X)$ of small orbifold $X$ is zero when $k=1$.
$b_k (X) =\Sigma_{k}^{n}~ h_i $ if $1 < k \leq n $ and $b_0 (X) = h_0 = 1 $.
Hence modulo $2$ Euler characteristic of $X$ is
\begin{equation}
\mathfrak{X}(X) = h_0 + \Sigma_{k=2}^{n}{(-1)^k \Sigma_{k}^{n} h_i} =\Sigma_{0}^{[n/2]} h_{2i}.
\end{equation}
Observe that  $b_k (X) \neq  b_{n-k} (X)$ if $1 \leqslant k < n$. Hence the Poincar\'{e} Duality 
for small orbifolds is not true with coefficients in $\ZZ_2$.
\end{remark}

\section{Cohomology ring of small orbifolds}\label{crto}
We have shown that the even dimensional small orbifolds are compact, connected, orientable.
Let $\mathcal{X}$ be an even dimensional small orbifold over the polytope $P$. 
Hence by the following Proposition we get that the cohomology ring of
$\mathcal{X}$ satisfy the Poincar\'{e} duality with coefficients in rationals.
\begin{prop}[Proposition $1.28$, \cite{ALR}]
If a compact, connected Lie group $G$ acts smoothly and almost freely on an orientable,
connected, compact manifold $M$, then the cohomology ring
$H^{\ast}(M/G; \QQ)$ is a Poincar\'{e} duality algebra.
Hence, if $\mathcal{X}$ is a compact, connected, orientable orbifold, then $H^{\ast}(X; \QQ)$ will satisfy Poincar\'{e} duality.
\end{prop}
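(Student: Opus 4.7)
The plan is to transfer Poincaré duality from $M$ to the orbit space $M/G$ via the Borel construction, and then deduce the orbifold corollary by realizing any orbifold as such a quotient.

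First I would introduce the Borel construction $M_G = EG \times_G M$ with its projection $\pi_1 : M_G \to M/G$. Almost-freeness of the action means every isotropy group $G_x$ is finite, so the fibre of $\pi_1$ over $[x]$ is the classifying space $BG_x$, which has trivial rational cohomology. Hence the Leray spectral sequence of $\pi_1$ collapses with rational coefficients and yields the ring isomorphism
$$ H^*(M/G; \QQ) \cong H^*_G(M; \QQ). $$
To build a fundamental class I would use that $G$ is compact and connected: orbits are compact connected oriented $k$-manifolds (where $k = \dim G$), and integration along orbits pushes $[M] \in H_n(M; \QQ)$ to a class $[M/G] \in H_{n-k}(M/G; \QQ)$. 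Cap product with $[M/G]$ is then the candidate duality map, and I would prove its non-degeneracy by working in the Cartan--Koszul complex of basic forms $\Omega^*_{\mathrm{bas}}(M)$, the $G$-invariant differential forms that vanish on vertical vectors. This complex computes $H^*(M/G; \RR)$ by a theorem of Koszul, and an invariant Riemannian metric produces a Hodge decomposition on it; Poincaré duality on $M$ together with the Fubini identity for integration along orbits then gives the required non-degenerate pairing on basic cohomology, whence on rational cohomology by extension of scalars.

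For the orbifold statement I would represent the compact, connected, orientable orbifold $\mathcal{X}$ as $M/G$, taking $M$ to be the oriented orthonormal frame bundle of $\mathcal{X}$ and $G = SO(n)$ acting by change of frame. This $M$ is an honest compact oriented smooth manifold, and the $G$-action is smooth and almost free because the orbifold isotropy groups of $\mathcal{X}$ are finite by definition, so the first part applies directly and gives Poincaré duality for $H^*(\mathcal{X}; \QQ)$. The main obstacle is the non-degeneracy of the orbit-integration pairing, since isotropy varies from stratum to stratum and there is no global vertical volume form available; the basic-form / Hodge-theoretic route above is the cleanest way I know to handle this uniformly without stratifying the action by orbit type.
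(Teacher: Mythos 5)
The paper itself offers no proof here: it simply invokes Proposition~1.28 of Adem--Leida--Ruan \cite{ALR} as a black box, so there is no internal argument for your sketch to be compared against. Your plan is self-contained and in the right spirit, but a few steps as stated would not survive scrutiny, and a more economical route exists.

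On the first paragraph: the identification $H^*(M/G;\QQ)\cong H^*_G(M;\QQ)$ via the map $M_G\to M/G$ with rationally acyclic fibres $BG_x$ is fine. However, the sentence ``integration along orbits pushes $[M]\in H_n(M;\QQ)$ to a class $[M/G]\in H_{n-k}(M/G;\QQ)$'' is misstated: the ordinary pushforward $\pi_*\colon H_n(M)\to H_n(M/G)$ lands in a group that is zero because $\dim(M/G)=n-k<n$, so $\pi_*[M]=0$. What you want is the wrong-way (Gysin/transfer) map of degree $-k$, and its existence is essentially the content of what you are trying to prove, so invoking it here is circular unless you construct it first. Similarly, the transverse Hodge theory on $\Omega^*_{\mathrm{bas}}(M)$ is genuine machinery (Riemannian foliations, bundle-like metrics, basic Laplacian ellipticity à la El~Kacimi-Alaoui/Molino) and the basic-cohomology Poincaré duality itself requires a transverse orientability hypothesis that you would still have to verify; it does not come for free from orientability of $M$ alone.

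The cleaner and essentially standard argument is local: by the slice theorem, near an orbit $Gx$ the quotient $M/G$ looks like $V/G_x$ where $V$ is a slice and $G_x$ is finite. Since $G$ is connected it acts on $M$ preserving orientation; since $G$ is compact, $\mathrm{Ad}(G_x)\subset SO(\mathfrak g)$, so $G_x$ preserves the orientation of $T_x(Gx)\cong\mathfrak g$, and hence preserves the orientation of the slice $V$. Therefore $H_j(V/G_x,V/G_x-0;\QQ)\cong H_j(V,V-0;\QQ)^{G_x}$ equals $\QQ$ in degree $n-k$ and vanishes otherwise, i.e.\ $M/G$ is an oriented $\QQ$-homology manifold, and Poincaré duality over $\QQ$ follows at once. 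Note that the conclusion genuinely uses both compactness and connectedness of $G$: without connectedness the orientation argument above fails (e.g.\ $\ZZ_2$ acting on a sphere by a reflection), which your Hodge-theoretic route obscures. For the orbifold corollary your frame-bundle idea is correct, but you should say explicitly that it requires $\mathcal X$ to be effective (reduced); only then is the orthonormal frame bundle a smooth manifold with an almost free $SO(n)$ (or $O(n)$) action, and one still needs to check that the frame bundle is orientable, which follows from orientability of the orbifold and of the fibre $SO(n)$.
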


We rewrite Poincar\'{e} duality for small orbifolds using the intersection theory. The purpose is to show
the cup product in cohomology ring is Poincar\'{e} dual to intersection, see equation \ref{cupdu}.
The proof is akin to the proof of Poincar\'{e} duality for oriented
closed manifolds proved in \cite{GH}. To show these we construct a $\bf{q}$-$CW$ complex structure on $X$.
The $\bf{q}$-$CW$ complex structure on a Hausdorff topological space is constructed in \cite{PS}.

An open cell of $\bf{q}$-$CW$ complex is the quotient of an open ball by linear, orientation preserving action
of a finite group. Such an action preserves the boundary of open ball. The construction mirrors the
construction of usual $CW$ complex given in Hatcher \cite{Ha}. In \cite{PS} the authors show that
$\bf{q}$-cellular homology of a $\bf{q}$-$CW$ complex is isomorphic to its singular homology
with coefficients in rationals. Similarly we can show that $\bf{q}$-cellular cohomology of a
$\bf{q}$-$CW$ complex is isomorphic to its singular cohomology with coefficients in rationals.

Let $P$ be an $n$-dimensional simple polytope where $n$ is even and $\pi : X \to P$ be a small orbifold over $P$.
Let $P^{\prime}$ be the second barycentric subdivision of the polytope $ P $.
Let
\begin{equation}
 \{\eta_{\alpha}^k : \alpha \in \varLambda(k) ~\mbox{and}~ k = 0,1,\ldots, n\}
\end{equation}
be the simplices in  $P^{\prime}$. Here $k$ is the dimension of $ \eta_{\alpha}^k$ and $\varLambda(k)$ is an index set.
Let $(\eta_{\alpha}^k)^0$ be the relative interior of $k$-dimensional simplex $\eta_{\alpha}^k$.
\begin{defn}
A subset $Y \subseteq X$ is said to be relatively open subset of dimension $k$ if for each point $ y \in Y$ there exist
an orbifold chart $(\widetilde{U}, G, \psi)$ such that $\psi(V) \ni y$ is an open subset of $Y$, for some $k$-dimensional
submanifold $V$ of $\widetilde{U}$.
\end{defn}

Then $(\pi^{-1})(\eta_{\alpha}^k)^0$ is disjoint union of the following relatively open subsets
$$ \{(\sigma_{\alpha_i}^k)^0 \subset X : i= 1,\ldots, \alpha(k)\} $$
for some natural number $\alpha(k)$. Here $\sigma_{\alpha_i}^k$ is the closure of $ (\sigma_{\alpha_i}^k)^0 $ in $X$.
The restriction of $\pi$ on $\sigma_{\alpha_i}^k$ is a homeomorphism onto the simplex $ \eta_{\alpha}^k $
for $ i= 1,\ldots, \alpha(k) $. Then the collection 
\begin{equation}
\{\sigma_{\alpha_i}^k : i= 1,\ldots, \alpha(k) ~\mbox{and}~ \alpha \in \varLambda(k) ~\mbox{and}~ k = 0,1,\ldots, n\} 
\end{equation}
gives a simplicial decomposition of the small orbifold $X$. So 
\begin{equation}
 \mathcal K = \{\sigma_{\alpha_i}^k, \partial\}_{\alpha_i, k}
\end{equation}
is a simplicial complex of $ X $.
\begin{defn}
The transversality of two relatively open subsets $U$ and $V$ of $X$ at $p \in U \cap V$ is defined as follows:
\begin{enumerate}
\item If $p$ is a smooth point of $X$, we say $ U $ intersect $ V $ transversely at $p$ whenever $T_{p}(U) + T_{p}(V) = T_{p}(X)$.

\item If $p$ is an orbifold point of $X$ there exist an orbifold chart $(B^n, \ZZ_2, \phi_v)$ such that $ \phi_v(0)=p$.
We say $U$ intersect $V$ transversely at $p$ whenever $T_{0}(\phi_v^{-1}(U)) + T_{0}(\phi_v^{-1}(V)) = T_{0}(B^n)$.
\end{enumerate}
\end{defn}

Let $\sigma_{\alpha_i}^{k_1} $ and $\rho_{\beta_j}^{k_2}$ be two simplices of dimension $k_1$ and $k_2$
respectively in the simplicial complex $\mathcal{K}$ of $X$.
\begin{defn}
We say $\sigma_{\alpha_i}^{k_1} $ and $\rho_{\beta_j}^{k_2}$ intersect transversely at
$p\in \sigma_{\alpha_i}^{k_1} \cap \rho_{\beta_j}^{k_2}$ if there exist two relatively open subsets
$U \subset X$ and $V \subset X$ containing $\sigma_{\alpha_i}^{k_1} $ and $\rho_{\beta_j}^{k_2}$
respectively such that dim$(U)= k_1$, dim$(V)= k_2$ and $U $ intersect $V$ transversely at $p$.
\end{defn}

Let $U$ and $V$ be two complementary dimensional relatively open subset of $X$ that intersect transversely at $p \in U \cap V$.
\begin{defn}
Define the intersection index of $U$ and $V$ at $p$ to be $1$ if there exist oriented bases $\{\xi_1, \ldots, \xi_{k_1}\}$ and
$\{ \eta_1, \ldots, \eta_{k_2}\}$ for $T_{p}(U)$ $ (T_{0}(\phi_v^{-1}(U)))$ and  $T_{p}(V)$ $(T_{0}(\phi_v^{-1}(V)))$ respectively
such that $\{\xi_1, \ldots, \xi_{k_1}, \eta_1, \ldots, \eta_{k_2}\}$ is an
oriented basis for $T_pX (T_0 B^n)$ whenever $p$ is smooth (respectively orbifold) point of $X$.
Otherwise the intersection index of $U$ and $V$ at $p$ is $-1$.
\end{defn}
Since antipodal action on $B^n$ (as $n$ is even) is orientation preserving there is no ambiguity in the above definition.
Let $$A = \Sigma n_{\alpha_i} \sigma_{\alpha_i}^{k_1} ~\mbox{and}~  B = \Sigma m_{\beta_j} \rho_{\beta_j}^{k_2}$$
be two cycles of the simplicial complex $\mathcal{K}$ such that $n=k_1+k_2$ and they intersect transversely.
\begin{defn}
Define the intersection number of $A$ and $B$ is the sum of the intersection indixes (counted with multiplicity)
at their intersection points.
\end{defn}
The number is finite since $A$ and $B$ are closed subsets of compact space $X$.
We show that the intersection number depends only on the homology class of the cycle.
Let $\sigma_{\alpha_i}^{k_1} $ and  $\rho_{\beta_j}^{k_2}$ be two simplices in $\mathcal{K}$ with $k_1 + k_2 = n$.
From the construction of the simplicial complex $\mathcal{K}$ we make some observations.
\begin{obser}\label{obssigro}
 \begin{enumerate}
 \item $\sigma_{\alpha_i}^{k_1} $ and $\rho_{\beta_j}^{k_2}$ can not contain different orbifold points whenever their 
intersection is nonempty.
\item Each $\sigma_{\alpha_i}^{k_1} $ and $\rho_{\beta_j}^{k_2}$ can contain atmost one orbifold point.

\item If $\sigma_{\alpha_i}^{k_1} $ and  $\rho_{\beta_j}^{k_2}$ contain an orbifold point or not, whenever their 
intersection is nonempty, we can find a $\ZZ_2$-invariant smooth homotopy $$\mathcal{G} : [0,1] \times X_v \to X_v$$
fixing the orbifold point of $X_v$ such that $\mathcal{G}(0 \times U_{\alpha_i}^{k_1})$ and 
$\mathcal{G}(1 \times V_{\beta_j}^{k_2})$ intersect
transversely where $U_{\alpha_i}^{k_1} $ and $ V_{\beta_j}^{k_2} $ containing $\sigma_{\alpha_i}^{k_1} $ and
 $\rho_{\beta_j}^{k_2}$ respectively are suitable relatively open subsets of $X_v$ and dim$U_{\alpha_i}^{k_1}=k_1$,
dim$ V_{\beta_j}^{k_2} = k_2$.
\end{enumerate}
\end{obser}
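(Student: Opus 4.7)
The plan is to exploit the fact that the simplices in $\mathcal{K}$ come from the second barycentric subdivision $P^\prime$ of $P$, together with the explicit description of $\Sigma\mathcal{X}$ as $\{[t,v]^\sim : v \in V(P)\}$ and the orbifold charts $(B^n,\ZZ_2,\phi_v)$ constructed in Section \ref{def}.

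The first step is to record the combinatorial consequence of passing to the second barycentric subdivision: each closed simplex $\eta_\alpha^k$ of $P^\prime$ contains at most one vertex of $P$, and the closed stars $\mathrm{star}_{P^\prime}(v)$ for $v \in V(P)$ are pairwise disjoint and each contained in the open set $U_v$ used to build the chart at $v$. Because $\pi$ restricted to each $\sigma_{\alpha_i}^k$ is a homeomorphism onto $\eta_\alpha^k$, part (2) follows immediately: a simplex $\sigma_{\alpha_i}^k$ can contain an orbifold point $[t,v]^\sim$ only if $\eta_\alpha^k$ contains $v$, and at most one such vertex lies in $\eta_\alpha^k$. For part (1), if $\sigma_{\alpha_i}^{k_1}$ contains an orbifold point over $v$ and $\rho_{\beta_j}^{k_2}$ contains one over $w\neq v$, then $\pi(\sigma_{\alpha_i}^{k_1})\subset\mathrm{star}_{P^\prime}(v)$ and $\pi(\rho_{\beta_j}^{k_2})\subset\mathrm{star}_{P^\prime}(w)$; these are disjoint in $P^\prime$, so $\sigma_{\alpha_i}^{k_1}\cap\rho_{\beta_j}^{k_2}=\emptyset$.

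Part (3) is the main step. Fix a point $p\in\sigma_{\alpha_i}^{k_1}\cap\rho_{\beta_j}^{k_2}$. If no orbifold point is involved, one works entirely inside the manifold $M = X-\Sigma\mathcal{X}$ and applies the classical transversality theorem to produce a smooth isotopy of $V_{\beta_j}^{k_2}$, compactly supported near $p$, making it transverse to $U_{\alpha_i}^{k_1}$; extending by the identity gives the required homotopy $\mathcal{G}$ (which is automatically $\ZZ_2$-invariant, the group acting trivially here). If $p$ is an orbifold point, by (1) both simplices lie in the chart $X_v$ with chart map $\phi_v:B^n\to X_v$; lift $U_{\alpha_i}^{k_1}$ and $V_{\beta_j}^{k_2}$ to their $\ZZ_2$-invariant preimages $\widetilde U,\widetilde V\subset B^n$, each a disjoint union of two smooth submanifolds of dimensions $k_1,k_2$ meeting only at $0$. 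Apply equivariant transversality to the linear $\ZZ_2$-action on $B^n$ to obtain a $\ZZ_2$-equivariant smooth isotopy $\widetilde{\mathcal{G}}:[0,1]\times B^n\to B^n$, compactly supported in a small neighborhood of $0$ in $B^n\setminus\{0\}$, fixing the origin and making $\widetilde{\mathcal{G}}(1,\widetilde V)$ transverse to $\widetilde U$ away from $0$; push $\widetilde{\mathcal{G}}$ down by $\phi_v$ to obtain $\mathcal{G}$ on $X_v$.

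The main obstacle is the equivariant perturbation in part (3), namely producing a $\ZZ_2$-equivariant isotopy that simultaneously fixes the origin and achieves transversality. The issue is that one cannot in general perturb to transversality at a fixed point of the group action, so the strategy is to leave the origin alone (the two simplices already share only the single point $p$ there) and use an equivariant bump-function argument to perturb on the complement; equivariant transversality then holds because on $B^n\setminus\{0\}$ the $\ZZ_2$-action is free, so classical transversality applied to $\widetilde V/\ZZ_2\hookrightarrow (B^n\setminus\{0\})/\ZZ_2$ and then lifted yields the desired equivariant isotopy.
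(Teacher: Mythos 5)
Your parts (1) and (2) are handled correctly, and this is exactly what the second barycentric subdivision is for: the closed stars $\mathrm{star}_{P'}(v)$, $v\in V(P)$, are pairwise disjoint and lie in the chart neighborhoods $U_v$, and each closed simplex of $P'$ contains at most one vertex of $P$. Since $\pi|_{\sigma_{\alpha_i}^k}$ is a homeomorphism onto $\eta_\alpha^k$, both statements follow. The paper itself gives no proof here (the observation is presented as a consequence of the construction of $\mathcal{K}$), so you are supplying an argument the paper omits.

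Part (3) has a genuine gap at the orbifold point, and it is precisely the delicate point of the observation. You propose an equivariant isotopy ``compactly supported in a small neighborhood of $0$ in $B^n\setminus\{0\}$ \ldots making $\widetilde{\mathcal{G}}(1,\widetilde V)$ transverse to $\widetilde U$ \emph{away from} $0$.'' But since $k_1+k_2=n$ and both lifted simplices pass through $0$, a perturbation supported away from $0$ leaves the tangent planes $T_0\widetilde U$ and $T_0\widetilde V$ untouched; if they have nontrivial intersection the result is still not transverse at $0$, and meeting only at the single point $0$ does not imply transversality there (compare $y=x^2$ and $y=-x^2$ in $\RR^2$). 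Moreover your remark that one ``cannot in general perturb to transversality at a fixed point'' misidentifies the obstruction: because the $\ZZ_2$-action is the linear antipodal map, any \emph{linear} map on $\RR^n$ is automatically equivariant and fixes $0$. So the correct move is the opposite of what you propose: take a small linear transformation $A$ near the identity with $A(T_0\widetilde V)$ transverse to $T_0\widetilde U$ (such $A$ exist since transverse $k_2$-planes are open dense in the Grassmannian), interpolate it equivariantly to the identity outside a small ball using a radial bump function $\rho(|x|)$ (radial functions are $\ZZ_2$-invariant), and then handle the free region $B^n\setminus\{0\}$ by the quotient-and-lift argument you already gave. Combining the two stages produces the required $\ZZ_2$-equivariant $\widetilde{\mathcal{G}}$ fixing $0$ and achieving transversality everywhere, including at the orbifold point.
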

Let $\sigma_{\alpha_0}^{k_1} + \ldots + \sigma_{\alpha_{k_1}}^{k_1} $ be the boundary of $(k_1+1)$-simplex $\sigma_{\alpha}^{k_1+1}$.
The observations \ref{obssigro} also hold for the simplices $\sigma_{\alpha}^{k_1+1} $ and $\rho_{\beta_j}^{k_2}$
although $k_1+1+k_2 = n+1$. If $\mathcal{G}^{\prime}$ is the smooth homotopy and
 $\mathcal{G}^{\prime}(0 \times U_{\alpha}^{k_1+1}) \cap \mathcal{G}^{\prime}(1 \times V_{\beta_j}^{k_2})$ is nonempty then the subset
$$\mathcal{G}^{\prime}(0 \times U_{\alpha}^{k_1+1}) \cap \mathcal{G}^{\prime}(1 \times V_{\beta_j}^{k_2})$$ of $X$
is a collection of piecewise smooth curves. After lifting a curve to an orbifold chart (if necessary),
using the similar arguments as in \cite{GH} we can show that intersection number of
$\sigma_{\alpha_0}^{k_1} + \ldots + \sigma_{\alpha_{k_1}}^{k_1} $ and $\rho_{\beta_j}^{k_2}$ is zero.
Integrating these computation to the boundary $A = \Sigma n_{\alpha_i} \sigma_{\alpha_i}^{k_1} $ and the cycle
$ B = \Sigma m_{\beta_j} \rho_{\beta_j}^{k_2}$ we ensure that the intersection number of $A$ and $B$ is zero.

Let $\mathcal{K}^{\prime} = \{\tau_{\alpha_i}^k, \partial\} $ be the first barycentric subdivision of the complex $\mathcal{K}$.
Now we construct the dual $\bf{q}$-cell decomposition of the complex $\mathcal{K}$.
For each vertex $\sigma_{\alpha_i}^0$ in the complex $\mathcal{K}$, let
\begin{equation}
\ast\sigma_{\alpha_i}^0 = \displaystyle\bigcup_{_{\sigma_{\alpha_i}^0 \in \tau_{\beta_j}^{n}}} \tau_{\beta_j}^{n}
\end{equation}
be the $n$-dimensional $\bf{q}$-cell which is the union of the $n$-simplices $\tau_{\beta_j}^{n} \in \mathcal{K}^{\prime}$
containing $\sigma_{\alpha_i}^0$ as a vertex.
Then for each $k$-simplex $\sigma_{\alpha_i}^k$ in the decomposition $\mathcal{K}$, let
\begin{equation}
\ast\sigma_{\alpha_i}^k = \displaystyle\bigcap_{_{\sigma_{\beta_j}^0 \in \tau_{\alpha_i}^{n}}} \ast\sigma_{\beta_j}^0
\end{equation}
be the intersection of the $n$-dimensional $\bf{q}$-cells  associated to the $k+1$ vertices of $\sigma_{\alpha_i}^k$.
The $\bf{q}$-cells $\{\Delta_{\alpha_i}^{n-k} = \ast\sigma_{\alpha_i}^k \}$ give a $\bf{q}$-cell
decomposition of $X$, called the dual $\bf{q}$-cell decomposition of $\mathcal{K}$.
So the dual $\bf{q}$-cell decomposition $\{\Delta_{\alpha}^{n-k}\}$ is a $\bf{q}$-$CW$ structure on $X$.

From the description of dual $\bf{q}$-cells it is clear that $\Delta_{\alpha_i}^{n-k}$ intersects
$\sigma_{\alpha_i}^{k}$ transversely when dimension of $\sigma_{\alpha_i}^{k}$ is
greater than zero. $\Delta_{\alpha_i}^{n}$ is a quotient space of the antipodal action
on a symmetric convex polyhedral centred at origin in $\RR^{n}$.
Since the antipodal action on $\RR^{n}$ ($n$ even) preserve orientation of $\RR^{n}$,
we can define the intersection number of $\sigma_{\alpha_i}^{0}$ and $\Delta_{\alpha_i}^{n}$ to be $1$.
We consider the orientation on the dual $\bf{q}$-cell $\{\Delta_{\alpha_i}^{n}\}$ such that
the intersection number of $\sigma_{\alpha_i}^{k}$ and $\Delta_{\alpha_i}^{n-k}$ is $1$.

Using the same argument as Grifiths and Harris have made in the proof of Poincar\'{e} duality theorem in \cite{GH},
we can prove the following relation between boundary operator $\partial$ on the cell complex
$\{\sigma_{\alpha_i}^{k}\}$ and coboundary operator $\delta$ on the dual $\bf{q}$-cell complex
$\{\Delta_{\alpha_i}^{n-k}\}$ when dimension of $\sigma_{\alpha_i}^{k}$ is greater than one,
\begin{equation}
\delta(\{\Delta_{\alpha_i}^{n-k}\}) = (-1)^{n-k+1}\ast (\partial \sigma_{\alpha_i}^k).
\end{equation}

Let $\{\sigma_{\alpha_i}^{k}\} = <x,y> \in \mathcal{K}$ be a one simplex with the vertices $x, y$.
The orientation on $\{\sigma_{\alpha_i}^{k}\} $ comes from the orientation of $X$.
Since we are considering $\bf{q}$-cell structure on $X$, define
$\delta(\{\Delta_{\alpha_i}^{n-1}\}) = \ast \sigma_{y}^0 -  \ast \sigma_{x}^0$.
So we get a map $ \sigma_{\alpha_i}^k \to \Delta_{\alpha_i}^{n-k}$ which induces an isomorphism
\begin{equation}
\xi_k^{\prime} : H_{k}(X,\QQ) \to H^{n-k}_{\bf{q} \mbox{-}CW}(X,\QQ),
\end{equation}
where $H^{n-k}_{\bf{q} \mbox{-}CW}(X,\QQ)$ is $n-k$ th $\bf{q}$-cellular cohomology group.
Hence we have the following theorem for even dimensional small orbifold.

\begin{theorem}[Poincar\'{e} duality]
Let $X$ be an even dimensional small orbifold. The intersection pairing
 $$H_k(X,\QQ) \times H_{n-k}(X,\QQ) \to \QQ$$
is nonsingular; that is, any linear functional $H_{n-k}(X,\QQ) \to \QQ $ is expressible as the
intersection with some class $\Theta \in H_{k}(X,\QQ)$. There is an isomorphism $\xi_k^{\prime}$
from $ H_{k}(X,\QQ) $ to $H^{n-k}(X,\QQ)$.
\end{theorem}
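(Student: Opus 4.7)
The plan is to follow the classical proof of Poincar\'{e} duality due to Griffiths-Harris \cite{GH}, making essential use of the simplicial decomposition $\mathcal K=\{\sigma_{\alpha_i}^k\}$ and the dual $\bf{q}$-cell decomposition $\{\Delta_{\alpha_i}^{n-k}\}$ of $X$ constructed above, together with the homotopy-invariance of transverse intersection numbers established in Observation \ref{obssigro}. Throughout, the parity hypothesis on $n$ is used solely to guarantee that the local antipodal $\ZZ_2$-action in each orbifold chart $(B^n,\ZZ_2,\phi_v)$ is orientation-preserving, so that intersection indices at orbifold points are unambiguously defined.

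First I would promote the basis assignment $\sigma_{\alpha_i}^k \mapsto \Delta_{\alpha_i}^{n-k}$ to the isomorphism $\xi_k'$. Extended $\QQ$-linearly, it is a bijection between simplicial $k$-chains of $\mathcal K$ and $\bf{q}$-cellular $(n-k)$-cochains of $\{\Delta_{\alpha_i}^{n-k}\}$. The formula
$$\delta(\Delta_{\alpha_i}^{n-k}) = (-1)^{n-k+1}\ast(\partial \sigma_{\alpha_i}^k),$$
already derived above, together with the special treatment of $1$-simplices, shows that this bijection intertwines $\partial$ and $\delta$ up to a predictable sign. Hence it descends to an isomorphism $H_k(X,\QQ) \to H^{n-k}_{\bf{q}\text{-}CW}(X,\QQ)$, which by the main result of \cite{PS} is naturally identified with the singular cohomology $H^{n-k}(X,\QQ)$.

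Next I would show that the geometric intersection pairing is well defined on homology and corresponds, under $\xi_k'$, to the standard evaluation pairing. Given cycles $A=\sum n_{\alpha_i}\sigma_{\alpha_i}^k$ and $B=\sum m_{\beta_j}\rho_{\beta_j}^{n-k}$, one uses Observation \ref{obssigro}(3) to perturb $B$ so that it meets the simplices of $A$ transversely; the computation preceding the statement, integrated over the boundary expression, shows that $A\cdot B$ is unchanged if $A$ or $B$ is altered by a boundary. One can therefore replace $B$ by a representative of the form $\sum m_{\alpha_i}\Delta_{\alpha_i}^{n-k}$ in the dual $\bf{q}$-cell decomposition, and the construction of the dual cells yields
$$A\cdot B = \sum_i n_{\alpha_i}\, m_{\alpha_i}.$$
This identifies the intersection pairing with the tautological duality pairing via $\xi_k'$. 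Since $\xi_k'$ is an isomorphism, nonsingularity follows, and every linear functional on $H_{n-k}(X,\QQ)$ is represented by intersection with a unique class $\Theta \in H_k(X,\QQ)$.

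The main obstacle is handling transversality at orbifold points, since the simplices $\sigma_{\alpha_i}^k$ and their duals $\Delta_{\alpha_i}^{n-k}$ can meet precisely there. This is addressed by working in the $\ZZ_2$-chart around each singular point: one produces a $\ZZ_2$-equivariant smooth isotopy upstairs in $B^n$ putting the two lifted simplices in general position, and descends to a transverse configuration in $X_v$. The orientation-preservation coming from $n$ being even ensures that contributions from the two preimages of each transverse intersection point add coherently rather than cancel, so the downstairs intersection index is well defined. Once this local step is in place, the cycle-independence arguments and the dual-basis relation above are routine translations of the manifold proof in \cite{GH}.
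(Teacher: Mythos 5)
Your proposal is correct and takes essentially the same approach as the paper: both follow the Griffiths--Harris argument using the simplicial complex $\mathcal{K}$, the dual $\mathbf{q}$-cell decomposition $\{\Delta_{\alpha_i}^{n-k}\}$, the coboundary formula $\delta(\Delta_{\alpha_i}^{n-k}) = (-1)^{n-k+1}\ast(\partial \sigma_{\alpha_i}^k)$, the identification of $\mathbf{q}$-cellular cohomology with singular cohomology via \cite{PS}, and the even-dimensionality of $X$ to handle orientation at orbifold points. Your write-up is somewhat more explicit about why the dual-basis normalization forces nonsingularity of the pairing, which the paper leaves implicit.
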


Using this Poincar\'{e} duality theorem for even dimensional small orbifold
we can calculate the cohomology groups of small orbifold $X$.
\begin{theorem}\label{cohom}
The singular cohomology groups of the even dimensional small orbifold $ X $ with coefficients in $\QQ$ is
$$
H^k(X) = \left\{ \begin{array}{ll} \QQ & \mbox{if} ~ k = 0 ~ \mbox{and if} ~ k = n ~ \mbox{even}\\
\oplus_{h_k}~\QQ & \mbox{if} ~ k ~ \mbox{is even},~ 0 < k < n\\
0 & \mbox{otherwise}.
\end{array} \right.
$$
\end{theorem}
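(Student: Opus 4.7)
The plan is to obtain this cohomology computation as a direct consequence of the Poincar\'e duality theorem just established, combined with the rational homology computation of Corollary \ref{ho} and the Dehn--Sommerville relations. Since $X$ is even dimensional (so $n$ is even) and compact, connected, orientable as noted in the remark following the integral homology theorem, all the machinery needed is already in place.

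First I would invoke the isomorphism $\xi_k^{\prime} : H_k(X,\QQ) \to H^{n-k}(X,\QQ)$ from the Poincar\'e duality theorem, which gives
\begin{equation}
H^k(X,\QQ) \cong H_{n-k}(X,\QQ)
\end{equation}
for every $k$. Plugging in the values of $H_{n-k}(X,\QQ)$ provided by Corollary \ref{ho}, I would analyze three cases. If $k=0$, then $n-k=n$ is even and $H_n(X,\QQ) = \QQ$, so $H^0(X,\QQ) = \QQ$. If $k=n$, then $n-k=0$ and $H_0(X,\QQ) = \QQ$, so $H^n(X,\QQ) = \QQ$. If $k$ is odd, then since $n$ is even, $n-k$ is odd with $0 < n-k < n$, and Corollary \ref{ho} gives $H_{n-k}(X,\QQ) = 0$; hence $H^k(X,\QQ)=0$.

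The remaining case is $k$ even with $0 < k < n$, which is where the Dehn--Sommerville relations are essential. Here $n-k$ is also even and satisfies $0 < n-k < n$, so Corollary \ref{ho} gives $H_{n-k}(X,\QQ) = \oplus_{h_{n-k}} \QQ$. Applying the Dehn--Sommerville identity $h_{n-k} = h_k$ recorded in Section \ref{hom}, I conclude $H^k(X,\QQ) = \oplus_{h_k}\QQ$, as required.

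There is no real obstacle here; the theorem is essentially a bookkeeping consequence of the two inputs. The only point requiring any care is making sure the indexing works out so that Poincar\'e duality transports the $h_{n-k}$ from homology into the desired $h_k$ in cohomology, and this is handled by the symmetry $h_i = h_{n-i}$ of the $h$-vector of the simple polytope $P$.
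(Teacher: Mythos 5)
Your proposal is correct and matches the paper's intended route: the paper simply asserts that Theorem \ref{cohom} follows from the Poincar\'e duality theorem just proved, and you have supplied exactly the bookkeeping (Corollary \ref{ho} together with the Dehn--Sommerville symmetry $h_i = h_{n-i}$) needed to make that step explicit. As a minor aside, since $\QQ$ is a field the universal coefficient theorem already gives $H^k(X,\QQ) \cong H_k(X,\QQ)$ directly, producing the same answer without invoking Dehn--Sommerville; the agreement of the two computations is of course forced by Poincar\'e duality.
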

We can also define a product $\mu_{k_1k_2}$ similarly as in \cite{GH} but some care is needed at orbifold points.
The product
\begin{equation}
\mu_{k_1k_2} : H_{n-k_1}(X,\QQ) \times H_{n-k_2}(X,\QQ) \to H_{n-k_1-k_2}(X,\QQ)
\end{equation}
on the homology of $X$ in arbitrary dimensions satisfying the following commutative diagram.
\begin{equation}\label{cupdu}
\begin{CD}
H_{n-k_1}(X,\QQ) \times H_{n-k_2}(X,\QQ) @>\mu_{k_1k_2}>> H_{n-k_1-k_2}(X,\QQ)\\
@V\xi_{n-k_1}\times \xi_{n-k_2}VV  @V\xi_{n-k_1-k_2}VV \\
H^{k_1}(X,\QQ) \times H^{k_2}(X,\QQ) @>\mathfrak{u} >> H^{k_1+k_2}(X,\QQ)
\end{CD}
\end{equation}
where the lower horizontal map $\mathfrak{u}$ is the cup product in cohomology ring.

We write some observations about the transversality of faces of an $n$-dimensional polytope $P$ ($n$ even).
Let $F$ and $F^{\prime}$ be two faces of $P$. $F$ and $F^{\prime}$ intersect transversely if
codim$(F \cap F^{\prime})$ $=$ codim$ F~ +$ codim$ F^{\prime}$.
Since $P$ is simple polytope, the following two properties are satisfied.
\begin{property}\label{propo1}
 Let $F$ be a $2k$-dimensional face of $P$ and $u$ be a vertex of $F$. Then there is a unique $(n -2k)$-dimensional
face $F^{\prime}$ of $P$ such that $F$ and $F^{\prime}$ meet at $u$ transversely.
\end{property}
\begin{property}\label{propo2}
 Let $F$ be a face of codimension $2k$. Then there is $k$ many distinct faces of codimension two such that
they intersect transversely at each point of $F$.
\end{property}
\begin{lemma}
Let $\pi: X \to P$ be an even dimensional small orbifold and $X(F, \vartheta^{\prime}) = \pi^{-1}(F)$ for
each face $F$ of $P$. Then
\begin{enumerate}
\item For each $2k$-dimensional face $F$ of $P$, the homology class represented by $X(F, \vartheta^{\prime})$, denoted by
$[X(F, \vartheta^{\prime})]$, is not zero in $H_{\ast}(X, \QQ)$.

\item The cohomology ring $H^{\ast}(X, \QQ)$ is generated by $2$-dimensional classes.
\end{enumerate}
\end{lemma}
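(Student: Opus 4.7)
The plan is to deduce both parts from the Poincar\'{e} duality theorem just established, combined with Properties \ref{propo1}--\ref{propo2} and the intersection--cup product compatibility expressed in diagram \ref{cupdu}.

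For part (1), the strategy is to exhibit an explicit transverse intersection of index $\pm 1$. Given a $2k$-dimensional face $F$ of $P$, pick any vertex $u \in V(F)$. Property \ref{propo1} supplies a unique $(n-2k)$-dimensional face $F^{\prime}$ of $P$ meeting $F$ transversely at $u$; since $n$ is even, $n-2k$ is also even and $X(F^{\prime}, \vartheta^{\prime\prime})$ is a legitimate even-dimensional suborbifold. Because $F \cap F^{\prime} = \{u\}$ in $P$, we have
$$ X(F, \vartheta^{\prime}) \cap X(F^{\prime}, \vartheta^{\prime\prime}) = \pi^{-1}(u), $$
a single orbifold point. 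In the chart $(B^n, \ZZ_2, \phi_u)$ at $u$, the two suborbifolds pull back to complementary coordinate subspaces meeting transversely at the origin, so their intersection in $X$ is transverse with intersection index $\pm 1$. Hence the intersection number $\langle [X(F, \vartheta^{\prime})], [X(F^{\prime}, \vartheta^{\prime\prime})] \rangle = \pm 1 \neq 0$, and nondegeneracy of the intersection pairing forces $[X(F, \vartheta^{\prime})]$ to be nonzero in $H_*(X, \QQ)$.

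For part (2), the first step is a reduction to $H^2$ using Property \ref{propo2}. A face $F$ of codimension $2k$ is a transverse intersection $F = F_1 \cap \cdots \cap F_k$ of codimension-two faces of $P$, so
$$ X(F, \vartheta^{\prime}) = X(F_1, \vartheta_1^{\prime}) \cap \cdots \cap X(F_k, \vartheta_k^{\prime}), $$
the intersection being transverse in $X$ because transversality in $P$ is inherited in every chart $\phi_v$. Writing $\tau_{F_i} := \xi^{\prime}_{n-2}([X(F_i, \vartheta_i^{\prime})]) \in H^2(X, \QQ)$ and iterating diagram \ref{cupdu} gives
$$ \xi^{\prime}_{n-2k}([X(F, \vartheta^{\prime})]) = \pm\, \tau_{F_1} \smile \cdots \smile \tau_{F_k}, $$
which places every Poincar\'{e} dual of a face class inside the subring of $H^*(X, \QQ)$ generated by $H^2(X, \QQ)$.

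What remains is to show that these duals actually span $H^*(X, \QQ)$, equivalently that $\{[X(F, \vartheta^{\prime})] : \dim F = 2k\}$ spans $H_{2k}(X, \QQ)$ for each $k$. I will extract this from the CW structure of Section \ref{cw}: every open $2k$-cell of $X$ is $W_{e_v}$ for some $(v, e_v) \in I_{Q^{2k}}$, where $F_v^{2k}$ is a $2k$-face of $P$ with $\phi$-maximal vertex $v$. Applying the construction of Section \ref{cw} directly to the restricted functional $\phi|_{F_v^{2k}}$ produces a CW decomposition of the suborbifold $X(F_v^{2k}, \vartheta^{\prime})$ whose unique top-dimensional open cell is $W_{e_v}$; since this decomposition agrees with the one inherited from the ambient $X$ and boundary operators in even cellular degree vanish, we obtain $[W_{e_v}] = \pm [X(F_v^{2k}, \vartheta^{\prime})]$ in $H_{2k}(X, \QQ)$. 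The chains $\{W_{e_v}\}$ already generate $C_{2k}$ and hence surject onto $H_{2k}(X, \QQ)$, so the fundamental classes of the $2k$-dimensional face suborbifolds span $H_{2k}(X, \QQ)$, completing the argument. The main technical obstacle is precisely this last identification $[W_{e_v}] = \pm[X(F_v^{2k}, \vartheta^{\prime})]$: verifying compatibility of the ambient CW structure on $X$ with the one inherited on the suborbifold, and tracking orientations carefully through diagram \ref{cupdu}, so that the cup-product formula above holds with honest rational coefficients rather than merely up to sign at each step.
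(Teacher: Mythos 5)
Your proof is correct and follows essentially the same route as the paper: for (1) you use Property \ref{propo1} to pair $[X(F,\vartheta')]$ with $[X(F',\vartheta'')]$ via a single transverse intersection point and invoke nondegeneracy of the intersection pairing, and for (2) you factor $F$ via Property \ref{propo2} and push through diagram \ref{cupdu}. The one place you go beyond the paper's text is in justifying that the face classes actually span $H_{2k}(X,\QQ)$: the paper simply cites Corollary \ref{ho} (which only gives ranks) and defers the identification of a basis to the $\mathbf{q}$-CW argument that appears after the Lemma, whereas you extract this from the Section \ref{cw} CW structure directly; both arguments are sound, and your version closes a small gap in the written exposition.
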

\begin{proof}
The space $X(F, \vartheta^{\prime})$ is a $2k$-dimensional suborbifold of $X$, for each $2k$-dimensional
($0 \leq 2k \leq n$) face $F$ of $P$. By Corollary \ref{ho} we get that the homology in degree $2k$ of
$X$ is generated by the classes of form $[X(F, \vartheta^{\prime})]$, where $F$ is a $2k$-dimensional face.

By equation \ref{cupdu}, the dual of $X(F \cap F^{\prime}, \vartheta^{\prime})$ is the cup product of the dual of
$[X(F, \vartheta^{\prime})]$ and the dual of $[X(F^{\prime}, \vartheta^{\prime})]$, if $F$ and $F^{\prime}$
intersect transversely and otherwise the dual of $X(F \cap F^{\prime}, \vartheta^{\prime})$ is zero.

The property \ref{propo1} tells that there is an $(n-2k)$-dimensional face $F^{\prime}$ which intersects
$F$ transversely at a vertex of $P$. Since the homology classes $[X(F, \vartheta^{\prime})]$ and
$[X(F^{\prime}, \vartheta^{\prime})]$ are dual in intersection pairing of Poincar\'{e} duality, they are both nonzero.
This proves (1) of the above Lemma.

In theorem \ref{cohom} we show the odd dimensional cohomology ring is zero. The cohomology in degree $2k$ is
generated by Poincar\'{e} duals of classes of the form $[X(F, \vartheta^{\prime})]$, codim$ F = 2k$.
By property \ref{propo2}, $F$ is the transverse
intersection of distinct faces of codimension two. Hence, the Poincar\'{e} dual of $[X(F, \vartheta^{\prime})]$
is the product of cohomology classes of dimension $2$. This proves (2) of the above Lemma.
\end{proof}

Recall the index function $ind_P$ from section \ref{hom}. Let $\hat{F}_{v} \in \mathfrak{F}(P)$ be the smallest
face containing the inward pointing edges incident to the vertex $v$ of $P$. Let $w$ be the Poincar\'{e} dual
of class of the form $[X(\hat{F}_{v}, \vartheta^{\prime})]$, also denoted by $[v]$. Let $\{ v_1, v_2, \ldots,$ $ v_r\}$ be the set of
vertices of $P$ such that $ ind_P(v_{i})= n-2$. We show that $\{ w_1, w_2, \ldots, w_r\}$ is a minimal
generating set of $H^{\ast}(X, \QQ)$.

Let  $A_j= \{v \in V(P) : ind_P(v)= j $. Let $U_{\hat{F}_v}$ be the open subset of $\hat{F}_v$ obtained by
deleting all faces of $\hat{F}_v$ not containing the vertex $v$. From section \ref{def} it is clear that
$\pi^{-1}(U_{\hat{F}_v})$ is homeomorphic to the orbit space $B^j/ \ZZ_2$, where $\ZZ_2$ acttion on $B^j$
is antipodal. So $\pi^{-1}(U_{\hat{F}_v})$ is $j$-dimensional $\bf{q}$-cell in $X$. Clearly
$$ X = \bigcup_{v \in V(P)} \pi^{-1}(U_{\hat{F}_v}). $$
This gives a $\bf{q}$-$CW$ structure on $X$. From Theorem 1.20 of \cite{BP}, we get the number of
$j$-dimensional cells is $h_{n-j}$, cardinality of $A_j$. So the corresponding $\bf{q}$-cellular chain
complex gives that $\{[v] : v \in A_j\}$ is a basis of $H_{j}(X, \QQ)$ if $j$ is even.
Theorem \ref{cohom} tells that $\{ w = \xi_j([v]) : v \in A_j\}$ is a basis of $H^{n-j}(X, \QQ)$ if $j$ is even.

Let $F$ be a codimension $2k$ face of $P$ with top vertex $v$ of index $n-2k$. By property \ref{propo2}
$F$ is unique intersection of $k$ many distinct codimension $2$ faces $\hat{F}_{v_{i_1}}, \ldots, \hat{F}_{v_{i_k}}$
with top vertices $v_{i_1}, \ldots, v_{i_k} \in \{ v_1, v_2, \ldots,$ $ v_r\}$ respectively. 
Hence $w_{i_1} \ldots w_{i_k} = w$ in $H^{\ast}(X, \QQ)$. Consider the polynomial ring $\QQ[w_1, w_2, \ldots, w_r]$.
Let the map
\begin{equation}
\mu_{n_{i_1} \ldots n_{i_l}} : H_{n_{i_1}}(X,\QQ)\times \cdots \times H_{n_{i_l}}(X,\QQ) \to H_{n-n_{i_1}-\cdots -n_{i_l}}(X,\QQ)
\end{equation}
be defined by the repeated application of the product map $\mu_{n_{i_1}n_{i_2}}$.
Let $I$ be the ideal of $\QQ[w_1, w_2, \ldots, w_r]$  generated by the following elements
 
\begin{equation}
S = \left\{ \begin{array}{ll} w_{i_1}w_{i_2}\ldots w_{i_l} & \mbox{if}~ \mu_{n_{i_1} \ldots n_{i_l}}([v_{i_1}], \ldots, [v_{i_l}])=0
~\mbox{in} ~ H_{n-\{n_{i_1}+\cdots + n_{i_l}\}}(X, \QQ)\\
\prod_{1}^{l_1}w_{i_k} - \prod_{1}^{l_2}w_{j_l} & \mbox{if} ~
\mu_{n_{i_1} \ldots n_{i_{l_1}}}([v_{i_1}], \ldots ,[v_{i_{l_1}}])=\mu_{n_{j_1} \ldots n_{j_{l_2}}}([v_{j_1}], \ldots, [v_{j_{l_2}}])
~ \mbox{in}\\
& H_{n-\{n_{i_1}+\cdots + n_{i_l}\}}(X, \QQ) ~\mbox{with} ~ n_{i_1}+ \ldots + n_{i_{l_1}} =  n_{j_1} + \ldots + n_{j_{l_2}}
\end{array} \right.
\end{equation}
The Poincar\'{e} Duality theorem and intersection theory ensure that the relations among $w_i$'s are exactly as described
above. Hence we have the following theorem.
\begin{theorem}
The cohomology ring of even dimensional small orbifold $X$ over the simple polytope $P$ is isomorphic to the
quotient ring $ \QQ[w_1, w_2, \ldots, w_r]/I. $
\end{theorem}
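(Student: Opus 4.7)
The plan is to exhibit an explicit ring isomorphism $\Psi : \QQ[w_1, \ldots, w_r]/I \to H^{\ast}(X, \QQ)$ sending each generator $w_i$ to the degree-$2$ cohomology class $w_i$ defined just above. I will first build $\widetilde{\Psi} : \QQ[w_1, \ldots, w_r] \to H^{\ast}(X, \QQ)$ on the polynomial generators and extend multiplicatively. The commutative diagram \ref{cupdu} shows that a cup product $w_{i_1} \cdots w_{i_l}$ in $H^{\ast}(X, \QQ)$ equals the Poincar\'e dual of the iterated intersection product $\mu_{n_{i_1} \ldots n_{i_l}}([v_{i_1}], \ldots, [v_{i_l}])$. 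Both types of generator listed in $S$ therefore map to zero by construction, so $\widetilde{\Psi}$ descends to a homomorphism $\Psi$ on the quotient.

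Next I will verify surjectivity. The preceding lemma already shows that $H^{\ast}(X, \QQ)$ is generated as a $\QQ$-algebra by the degree-$2$ classes $\{w_1, \ldots, w_r\}$, which are precisely the images of the generators, so $\Psi$ is surjective.

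For injectivity I will match bases. The $\bf{q}$-$CW$ discussion just before the theorem shows that for each even $j$ the set $\{[v] : v \in A_j\}$ is a $\QQ$-basis of $H_j(X, \QQ)$, so by Theorem \ref{cohom} and the duality $\xi'_{n-j}$ the set $\{w_v = \xi_j([v]) : v \in A_j\}$ is a basis of $H^{n-j}(X, \QQ)$. By Property \ref{propo2}, the face $\hat{F}_v$ of codimension $n - j = 2k$ is the transverse intersection of $k$ codimension-$2$ faces $\hat{F}_{v_{i_1}}, \ldots, \hat{F}_{v_{i_k}}$, and the cup product formula \ref{cupdu} gives $w_v = w_{i_1} \cdots w_{i_k}$ in $H^{\ast}(X, \QQ)$. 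Hence every cohomology basis element is realized as the image of at least one monomial. Given $p = \sum c_\alpha m_\alpha \in \ker \widetilde{\Psi}$, I will classify each monomial $m_\alpha$ by the basis element it maps to (or by zero), use the first type of generator of $I$ to remove the monomials with vanishing image, and use the second type to replace each remaining group of monomials having a common image by scalar multiples of a single fixed representative. Vanishing of $\widetilde{\Psi}(p)$ then forces each resulting coefficient to vanish, proving $p \in I$.

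The principal obstacle is this final injectivity step: I must ensure that the relations listed in $S$ already exhaust every cohomological identity among products of the $w_i$. This is handled by the duality diagram \ref{cupdu}, which converts every cohomological relation among products of degree-$2$ classes into an equality or vanishing of intersection classes in homology, and by construction each such equality or vanishing appears verbatim as a generator of $S$. A subsidiary point I will need to be careful about is that, for a fixed $v \in A_j$, the factorization $\hat{F}_v = \hat{F}_{v_{i_1}} \cap \cdots \cap \hat{F}_{v_{i_k}}$ from Property \ref{propo2} is essentially unique (up to reordering), so the selection of a single representative monomial $\bar{m}_v$ in each homology class does not miss any hidden relation among the $w_i$.
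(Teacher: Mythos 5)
Your proposal is essentially the same approach the paper takes, but with substantially more detail. The paper's own proof of this theorem is a single sentence ("The Poincar\'{e} Duality theorem and intersection theory ensure that the relations among $w_i$'s are exactly as described above"), so your fleshed-out argument -- constructing $\widetilde{\Psi}$, checking well-definedness via the commutative diagram \eqref{cupdu}, deducing surjectivity from the preceding lemma, and arguing injectivity by matching representative monomials against the $\bf{q}$-cellular basis $\{[v] : v \in A_j\}$ -- is the natural expansion of what the paper leaves implicit.

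One step deserves more care than you give it, though the same gap is implicit in the paper. Your injectivity argument begins "classify each monomial $m_\alpha$ by the basis element it maps to (or by zero)." This presupposes that for every subset $\{i_1,\ldots,i_l\}$, the class $\widetilde{\Psi}(w_{i_1}\cdots w_{i_l})$ is either zero or equal (up to sign) to a \emph{single} basis element $w_v$, and not a nontrivial $\QQ$-linear combination of several. Unpacking the duality, $\widetilde{\Psi}(w_{i_1}\cdots w_{i_l})$ is Poincar\'{e} dual to $[X(F,\vartheta')]$ for $F = \hat{F}_{v_{i_1}}\cap\cdots\cap\hat{F}_{v_{i_l}}$ when the intersection is transverse; for this to be a basis element one needs $F = \hat{F}_u$ for some unique vertex $u$ with $ind_P(u) = \dim F$, i.e.\ the $\bf{q}$-cellular decomposition of $X(F,\vartheta')$ must contain exactly one top-dimensional cell. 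The paper asserts the corresponding fact only for the faces $\hat{F}_v$ themselves ("Let $F$ be a codimension $2k$ face of $P$ with top vertex $v$ of index $n-2k$..."), and the set $S$ only records relations of the form $m_\alpha = 0$ or $m_\alpha = m_\beta$, never $m_\alpha = \sum c_j m_{\beta_j}$. Your statement that the diagram "converts every cohomological relation among products of degree-$2$ classes into an equality or vanishing ... and by construction each such equality or vanishing appears verbatim as a generator of $S$" is therefore not quite a proof -- it is a restatement of the needed claim. To close this you should show, via the index function and the $\bf{q}$-$CW$ structure, that every face arising as a transverse intersection of $\hat{F}_{v_{i}}$'s contributes a single basis class, after which your collapse-to-representatives argument does give injectivity.
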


\section{Some remarks on toric version}
\begin{defn}
 The function $\psi : \mathcal{F}(P) \to \ZZ^{n-1} $ is called an isotropy function of $P$
if the facets $F_{i_{1}}, F_{i_{2}}, \ldots , F_{i_{n}} $ intersect at vertex of $P$ then the set
$$\{ \psi_{i_{1}}, \psi_{i_{2}}, \ldots, \psi_{i_{k-1}}, \hat \psi_{i_{k}}, \psi_{i_{k+1}}, \ldots, \psi_{i_{n}} \},$$
where $ \psi(F_i) = \psi_i$, is a basis of $ \ZZ^{n-1} $ over $ \ZZ $ for each $ k $ ($ 1 < k < n $).
\end{defn}
Here the symbol $\hat{}$ represents the omission of corresponding entry.

The quotient  $\TT^{n-1} = (\ZZ^{n-1}\otimes \RR) / \ZZ^{n-1} $ is a compact $(n-1)$-dimensional torus.
Suppose $ F = F_1 \cap \ldots \cap F_l $. Let $G_F$ be the subgroup of $\TT^{n-1}$ determined by 
the span of $\psi_{1}, \ldots, ,\psi_{l} $. Let $S(P,\psi)$ be the quotient space of equivalence relation
$ \sim_T $ on $\TT^{n-1} \times P $ define by 
\begin{equation}
(t,p) \sim_T (s,q) ~ \mbox{if} ~ p = q ~ \mbox{and} ~ s^{-1}t \in G_{F(p)}
\end{equation}
where $F(p)$ is the unique face of polytope $P$ whose relative interior contains $p$.
Then every point of $S(P,\psi)$ are smooth point except a finite set of points corresponding to the set $V(P)$ if $n \geq 3$.
When $ n = 2 $ the quotient space is homeomorphic to $3$-sphere. 
Only this is the case where the quotient space is a manifold.

We can give a $CW$-structure on $S(P,\psi)$ with cells in dimension $0, 1, 3, \ldots, 2n-1$ only.
The zero dimensional cells correspond to the set $V(P)$.
The one dimensional cells correspond to the relative interior of each edge of a maximal tree of the $1$-skleton of $P$.
Hence in the cellular chain complex of the $CW$-structure on $S(P,\psi)$ each boundary map
$d_k^{\prime}$ is zero except $d_1^{\prime}$. The map $d_1^{\prime}$ is injective and the image
of the map $d_1^{\prime}$ is a direct summand of a free module with codimension-$1$.
Hence we can prove the following theorem.
\begin{theorem}
The singular homology of the space $ S(P,\psi) $ with $\ZZ$ coefficients is
$$H_k(S(P, \psi), \ZZ) = \left\{ \begin{array}{ll} \ZZ & \mbox{if} ~ k = 0 ~ \mbox{and if} ~ k = 2n-1\\
\oplus_{\Sigma_{l}^{n} h_i}~\ZZ & \mbox{if} ~ k = 2l-1 ~ \mbox{is odd and}~ 1 < k <  2n-1\\
0 & \mbox{otherwise}
\end{array} \right.
$$
\end{theorem}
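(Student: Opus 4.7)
My plan is to construct a CW structure on $S(P,\psi)$ that parallels the CW structure on the small orbifold built in Section~\ref{cw}, the only essential difference being that cells get doubled in dimension because the torus $\TT^{n-1}$ replaces the finite group $\ZZ_2^{n-1}$. I would fix a linear functional $\phi\colon\RR^n\to\RR$ separating the vertices of $P$, orient the 1-skeleton accordingly, and define $ind_P$ as in \eqref{indfun}. I would then repeat the inductive construction of Section~\ref{cw}: form the nested proper subcomplexes $Q^n\supset Q^{n-1}\supset\cdots\supset Q^1\supset Q^0=V(P)$, the index sets $I_{Q^k}$ of pairs $(v,e_v)$, and for each such pair retain the open subset $U_{e_v}$ of $F_v^k$. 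The candidate cell associated to $(v,e_v)\in I_{Q^k}$ is
$$W_{e_v}^T \;=\; (\TT^{n-1}\times U_{e_v})/\sim_T.$$

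The dimensions now differ from the orbifold case. Because $\psi$ is an isotropy function, for any codimension-$c$ face $F$ the $c$ vectors spanning $G_F$ are $\ZZ$-linearly independent, so $G_F\subset\TT^{n-1}$ is a subtorus of dimension $c$. Hence for a $k$-dimensional face with $k\geq 1$ the generic orbit over its relative interior is the $(k-1)$-torus $\TT^{n-1}/G_F$, and $W_{e_v}^T$ has dimension $k+(k-1)=2k-1$, while the fiber over a vertex is a point, yielding $0$-cells. A slice-by-slice argument analogous to the one in Section~\ref{def}---with the local model near an index-$n$ vertex now the cone on the $(2n-3)$-dimensional toric quotient $(\TT^{n-1}\times\partial\Delta^{n-1})/\sim_T$ rather than the cone on $\mathbb{RP}^{n-1}$---identifies each $W_{e_v}^T$ with an open ball $B^{2k-1}$. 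By the Dehn--Sommerville identities, the resulting cell counts are $\sum_{i=0}^n h_i$ in dimension $0$, $\sum_{i=1}^n h_i$ in dimension $1$, and $\sum_{i=k}^n h_i$ in each odd dimension $2k-1$ for $2\leq k\leq n$.

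With this CW decomposition, the cellular chain complex has $C_j=0$ in every positive even degree, so every boundary operator $d'_j$ with $j\geq 2$ vanishes for the trivial reason that its codomain is zero. Only $d'_1\colon C_1\to C_0$ is nontrivial, and since the $1$-cells are exactly the open edges of the maximal tree $Q^1$ in the $1$-skeleton of $P$, the map $d'_1$ is the tree boundary operator: injective with image a rank-$\sum_{i=1}^n h_i$ direct summand of $C_0$. Reading off homology then gives $H_0=\ZZ$, $H_1=0$, $H_{2l-1}=\ZZ^{\sum_{i=l}^n h_i}$ for $2\leq l\leq n$ (in particular $H_{2n-1}=\ZZ^{h_n}=\ZZ$), and zero in all remaining degrees, matching the claimed formula.

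The main obstacle is the local identification $W_{e_v}^T\cong B^{2k-1}$ at the vertex stratum. Unlike the orbifold case, where near a vertex the quotient is $B^n/\ZZ_2$ realised via an orbifold chart, here the entire $\TT^{n-1}$ collapses to a single point at each vertex, producing a genuine topological singularity. One must verify that this piece is nevertheless homeomorphic to an open ball of dimension $2k-1$, consistent with the fact (noted in the excerpt) that $S(P,\psi)$ is smooth away from the finite set lying above $V(P)$ and hence admits an honest CW decomposition.
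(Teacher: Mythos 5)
Your proposal is correct and follows essentially the same route as the paper's (very terse) argument in the last section: build the CW decomposition on $S(P,\psi)$ by applying the inductive Morse-theoretic construction of Section~\ref{cw} to the toric quotient, observe that the cells lie only in dimensions $0, 1, 3, \ldots, 2n-1$ with $\sum_{i=k}^n h_i$ cells in dimension $2k-1$, note that the absence of positive even-dimensional cells forces every boundary map except $d_1'$ to vanish, and read off the homology from the tree boundary map. Your dimension count $\dim W_{e_v}^T = k + (k-1) = 2k-1$ and your identification of the $1$-cells with the edges of the maximal tree $Q^1$ are exactly what the paper uses.

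One slip worth flagging: the local model near a vertex should be the open cone on $(\TT^{n-1}\times\Delta^{n-1})/\sim_T$, i.e.\ on the full simplex slice $P_a \cong \Delta^{n-1}$ and not its boundary. Since $(P_a,\psi_a)$ is a characteristic pair for the quasitoric manifold $\CP^{n-1}$, this quotient is the $(2n-2)$-dimensional space $\CP^{n-1}$, and the cone on it is $(2n-1)$-dimensional, matching the top cell. Your ``$(2n-3)$-dimensional toric quotient $(\TT^{n-1}\times\partial\Delta^{n-1})/\sim_T$'' has both the wrong space and the wrong dimension; this is exactly the toric replacement for the cone on $\mathbb{RP}^{n-1} = (\ZZ_2^{n-1}\times P_a)/\sim_0$ from Section~\ref{def}, so the analogy you are invoking should use $P_a$ itself. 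You are right that the open-ball identification $W_{e_v}^T\cong B^{2k-1}$ requires an argument at the vertex stratum; the paper does not supply one either, so your level of rigor matches the source, but closing that gap would require the cone-on-$\CP^{n-1}$ picture rather than the one you wrote.
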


\begin{remark}
If $n\geq 3$ and $P$ is a simple $n$-polytope, the space $ S(P,\psi) $ is not an orbifold. The Euler characteristic
of the space $S(P,\psi)$ is 
\begin{equation}
\mathfrak{X}(S(P,\psi), \ZZ) = h_0 + \displaystyle\sum_{k=2}^{n}{(-1)^{2k-1} \displaystyle\sum_{k}^{n} h_i}
=h_0 - \displaystyle\sum_{2}^{n} (i-1)h_{i}
\end{equation}
\end{remark}

\[\]
{\bf Acknowledgment.} The author would like to thank his advisor Mainak Poddar for helpful suggestions and
stimulating discussions. The author is thankful to the anonymous referee for helpful suggestions.
He would also like to thank I. S. I. for supporting his research grant during the preparation of the work was done.

\renewcommand{\refname}{References}


\vspace{1cm}

\vfill
\end{document}